\renewcommand\footnotemark{}
\theoremstyle{plain}
\newtheorem{Theorem}{Theorem}[section]
\newtheorem{Proposition}[Theorem]{Proposition}
\newtheorem{Lemma}[Theorem]{Lemma}
\newtheorem{Corollary}[Theorem]{Corollary}
\theoremstyle{definition}
\theoremstyle{remark}
\newtheorem{Example}[Theorem]{Example}
\begin{document}

  \title{Determinants  of Binomial-Related Circulant Matrices}
  
  \date{}
  
  \author{Trairat Jantaramas, Somphong Jitman and  Pornpan Kaewsaard}

  \thanks{T. Jantaramas is  with the Department of Mathematics, Faculty of Science,
      Silpakorn University  Nakhon Pathom 73000,  Thailand.}
  
  \thanks{S. Jitman (Corresponding Author)  is  with the Department of Mathematics, Faculty of Science,
      Silpakorn University  Nakhon Pathom 73000,  Thailand (sjitman@gmail.com).}
  
  \thanks{P.  Kaewsaard is  with the Department of Mathematics, Faculty of Science,
      Silpakorn University  Nakhon Pathom 73000,  Thailand.}

 \maketitle
\begin{abstract} 
    
    Due to their rich algebraic structures and various applications, circulant matrices have been  of interest and continuously studied. 
    In this paper, the notions of Binomial-related matrices have been introduced. Such matrices are  circulant matrices  whose the first row is the coefficients of $(x+zy)^n$, where $z$ is a complex number of norm $1$ and $n$ is a positive integer. 
    In the case where $z\in \{1,-1,i,-i\}$, the explicit formula for the determinant of such matrices are completely determined. Known results on the determinants of  binomial circulant matrices can be viewed as special cases. 
    Finally, some open problems are discussed.

    {\bf 2010 Mathematics Subject Classification}:   11B25, 15A33, 15B05
    
    {\bf Keywords} :   Determinants, Circulant matrices, Binomial coefficients, Euler's formula.
    
\end{abstract}

\section{Introduction}
A complex circulant matrix is a special kind of Toeplitz matrix    whose rows are composed of right cyclically shifted versions of a   list $\boldsymbol{a}=(a_0,a_1,\dots,a_{n-1})\in \mathbb{C}^n$.  Precisely, a complex circulant matrix  is of the form \begin{align*}  \left[\begin{array}{ccccc}
a_0&a_1&\dots&a_{n-2}&a_{n-1}\\
a_{n-1}&a_0&\dots&a_{n-3}&a_{n-2}\\
\vdots&   \vdots&      \ddots&   \vdots&   \vdots\\ 
a_2&a_3&\dots&a_{0}&a_{1}\\
a_1&a_2&\dots&a_{n-1}&a_{0}\\
\end{array}%
\right]\end{align*} 
for some vector $\boldsymbol{a}=(a_0,a_1,\dots,a_{n-1})\in \mathbb{C}^n$. Such  matrices have extensively been  studied since their
first appearance in the    paper by Catalan  in 1946 (see \cite{c1846}).   In 1994,  the algebraic structures, properties and some applications of circulant matrices  have been summarized in  the book ``Circulant Matrices''    in \cite{D1994}.     These matrices are interesting due to their rich algebraic structures and various applications  (see \cite{D1994},  \cite{GG2009},  \cite{KS2012},     \cite{r1},  and references therein). 
Circulant  matrices have been applied to various disciplines such as such as  image processing, communications, signal processing, networked systems and coding theory.  Circulant matrices can be diagonalized by a discrete Fourier transform, and hence linear equations that contain them may be quickly solved using a fast Fourier transform.   

Determinants   are  known for their applications in matrix theory and linear algebra, e.g., 
determining the area of a triangle via Heron's formula  in \cite{K2004},  solving linear systems using Cramer's rule in \cite{CK2010}, and determining the singularity of a  matrix. 	 Therefore, properties matrices and  determinants of matrices  have been extensively studied  (see \cite{CK2010}, \cite{MMMPSS2008}, and references therein).  The determinants of  circulant matrices have been studied  (see, for example, \cite{D1994}, \cite{KJ2015}, \cite{r6}, and \cite{r5}).
In \cite{R2013}, the determinants of $n\times n$ circulant matrices  whose first row  consists of the coefficients in the Binomial expansion of $(x+y)^{n-1}$ have been   completely  determined.  

In this paper, we focus on  a more general set up. Precisely,  we study  $n\times n$ circulant matrices  whose first row  consists of the coefficients in the  expansion of $(x+zy)^{n-1}$, where $z$ is a nonzero complex number and $n$ is a positive integer. Such matrices will be referred  to as Binomial-related circulant matrices.  Here, the determinants of Binomial-related circulant matrices  are completely determined in the cases where $z\in \{1,-1,i,-i\}$.  The determinant of  the $1\times 1$  Binomial-related circulant matrix  is always $1$.  In this paper, $n$ is assumed to be a positive integer greater than $1$.

The paper is organized as follows. In Section 2,  some basic results on matrices and trigonometric identities are recalled and proved.  The determinants of   $n\times n$ Binomial-related right circulant matrices   are studied in Section 3. In  Section 4, the analogous results for the determinants of   $n\times n$ Binomial-related left circulant matrices are given. Some remarks and open problems are discussed in Section 5.

\section{Preliminaries}
In this section, some  basic results on circulant matrices are recalled   together with introduction to the concept of Binomial-related circulant matrices.      Subsequently, some trigonometric identities required in the proofs of the main results are discussed.

\subsection{Left and Right Circulant  Matrices}

Given a positive integer $n$, denote by $M_n(\mathbb{C})$ the set of all $n\times n$ complex matrices. A matrix  $A \in M_n(\mathbb{C})$    is called  a  {\em right circulant matrix}   if  each row of $A$  is   rotated  one element to the right relative to the preceding row.   Precisely,  a complex  right circulant matrix is of the form 

\begin{align*}   %
\left[\begin{array}{ccccc}
a_0&a_1&\dots&a_{n-2}&a_{n-1}\\
a_{n-1}&a_0&\dots&a_{n-3}&a_{n-2}\\
\vdots&   \vdots&      \ddots&   \vdots&   \vdots\\ 
a_2&a_3&\dots&a_{0}&a_{1}\\
a_1&a_2&\dots&a_{n-1}&a_{0}\\
\end{array}%
\right]=:{\rm rcir}( \boldsymbol{a}),\end{align*} 
where  $\boldsymbol{a}=(a_0,a_1,\dots,a_{n-1})\in \mathbb{C}^n$.

The eigenvalues and the determinants of $n \times n$  circulant matrices have been determined  in terms of the $n$th roots of unity and the elements in its first row (see \cite{D1994}, \cite{KJ2015}, and \cite{r5}).

\begin{Lemma}\label{1.3}
    Let $\boldsymbol{a} = (a_{0},a_{1},\cdots,a_{n-1})\in \mathbb{C}^n$. Then  the  eigenvalues of  ${\rm rcir}({\boldsymbol{a}})$ are  of the form 
    \[\lambda_m = \sum_{k=0}^{n-1} a_ke^\frac{2km\pi i}{n} \]  for all  $m \in \{0,1,\cdots,n-1\}$. 
\end{Lemma}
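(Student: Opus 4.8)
The plan is to realize ${\rm rcir}(\boldsymbol{a})$ as a polynomial in a single cyclic permutation matrix and then diagonalize that permutation matrix explicitly. Let $P\in M_n(\mathbb{C})$ be the right cyclic shift matrix $P:={\rm rcir}(0,1,0,\dots,0)$, so that, indexing rows and columns by $\{0,1,\dots,n-1\}$, the $(j,k)$ entry of $P$ equals $1$ when $k\equiv j+1\pmod n$ and $0$ otherwise. A direct check on entries shows that $P^{k}={\rm rcir}(\boldsymbol{e}_k)$, where $\boldsymbol{e}_k$ is the standard basis vector with a $1$ in position $k$ (positions read modulo $n$); since the map $\boldsymbol{b}\mapsto{\rm rcir}(\boldsymbol{b})$ is linear, this yields
\[
{\rm rcir}(\boldsymbol{a})=\sum_{k=0}^{n-1}a_k P^{k}.
\]

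Next I would exhibit the eigenvectors of $P$. For $m\in\{0,1,\dots,n-1\}$ set $\zeta_m:=e^{2m\pi i/n}$ and $\boldsymbol{v}_m:=(1,\zeta_m,\zeta_m^{2},\dots,\zeta_m^{n-1})^{T}$. Evaluating $P\boldsymbol{v}_m$ coordinatewise and using $\zeta_m^{n}=1$ gives $P\boldsymbol{v}_m=\zeta_m\boldsymbol{v}_m$, hence $P^{k}\boldsymbol{v}_m=\zeta_m^{k}\boldsymbol{v}_m$ for every $k$. Substituting into the displayed identity,
\[
{\rm rcir}(\boldsymbol{a})\,\boldsymbol{v}_m=\sum_{k=0}^{n-1}a_k P^{k}\boldsymbol{v}_m=\Bigl(\sum_{k=0}^{n-1}a_k\zeta_m^{k}\Bigr)\boldsymbol{v}_m=\lambda_m\boldsymbol{v}_m,
\]
which is precisely the asserted formula $\lambda_m=\sum_{k=0}^{n-1}a_k e^{2km\pi i/n}$.

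It remains to see that the $\lambda_m$ exhaust the spectrum. The matrix whose columns are $\boldsymbol{v}_0,\dots,\boldsymbol{v}_{n-1}$ is a Vandermonde matrix in the pairwise distinct nodes $\zeta_0,\dots,\zeta_{n-1}$, hence invertible; thus $\{\boldsymbol{v}_0,\dots,\boldsymbol{v}_{n-1}\}$ is a basis of $\mathbb{C}^n$ consisting of eigenvectors of ${\rm rcir}(\boldsymbol{a})$, and $\lambda_0,\dots,\lambda_{n-1}$ is the complete list of its eigenvalues counted with multiplicity. I do not anticipate any genuine obstacle here; the only delicate point is keeping the shift convention consistent, so that one works with $P$ as above rather than its transpose (the latter would replace $\zeta_m$ by $\zeta_m^{-1}=\zeta_{n-m}$ in the exponents, a harmless reindexing that must nonetheless be matched carefully against the form of ${\rm rcir}(\boldsymbol{a})$ displayed before the lemma).
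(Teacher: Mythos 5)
Your proof is correct and complete: the paper itself states Lemma~\ref{1.3} without proof, citing \cite{D1994}, \cite{KJ2015}, and \cite{r5}, and the argument you give --- writing ${\rm rcir}(\boldsymbol{a})=\sum_{k}a_kP^{k}$ for the cyclic shift $P$ and diagonalizing $P$ via the vectors $\boldsymbol{v}_m=(1,\zeta_m,\dots,\zeta_m^{n-1})^{T}$ --- is exactly the standard proof those references supply. Your indexing is consistent with the paper's convention (entry $(j,k)$ of ${\rm rcir}(\boldsymbol{a})$ equals $a_{(k-j)\bmod n}$, so $P$ is the right shift and the eigenvalue comes out as $\sum_{k}a_k e^{2km\pi i/n}$ as claimed), and the Vandermonde step correctly shows these $n$ values exhaust the spectrum with multiplicity.
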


\begin{Lemma}  \label{2.5}
    Let $A \in M_n(\mathbb{C}) $.  If the eigenvalues of  $A$ are  $\lambda_0,\lambda_1,\dots.,\lambda_{n-1}$, then \[\det(A)=\prod_{i=0}^{n-1} \lambda_i.\]
\end{Lemma}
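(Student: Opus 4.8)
The plan is to prove this via the characteristic polynomial of $A$. Recall that, by definition, the eigenvalues $\lambda_0,\lambda_1,\dots,\lambda_{n-1}$ of $A$ are precisely the roots of the characteristic polynomial $p_A(t)=\det(tI_n-A)$, each listed with its algebraic multiplicity, so that the list has exactly $n$ entries. First I would observe that $p_A(t)$ is monic of degree $n$: expanding $\det(tI_n-A)$, the term $t^n$ arises only from the product of the diagonal entries $\prod_{i}(t-a_{ii})$, and no higher power of $t$ can occur. Since $\mathbb{C}$ is algebraically closed, this gives the full factorisation $p_A(t)=\prod_{i=0}^{n-1}(t-\lambda_i)$.

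Next I would evaluate both sides of this identity at $t=0$. On the left,
\[
p_A(0)=\det(-A)=(-1)^n\det(A),
\]
using the scalar rule $\det(cB)=c^n\det(B)$ with $c=-1$. On the right, $\prod_{i=0}^{n-1}(0-\lambda_i)=(-1)^n\prod_{i=0}^{n-1}\lambda_i$. Cancelling the common factor $(-1)^n$ yields $\det(A)=\prod_{i=0}^{n-1}\lambda_i$, as claimed.

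An alternative, if one wishes to avoid the characteristic polynomial entirely, is to invoke Schur's triangularisation theorem: there is a unitary $U$ and an upper-triangular $T$ with $A=UTU^{*}$, where the diagonal entries of $T$ are exactly $\lambda_0,\dots,\lambda_{n-1}$. Then $\det(A)=\det(U)\det(T)\det(U^{*})=|\det(U)|^2\det(T)=\det(T)=\prod_{i=0}^{n-1}\lambda_i$, since the determinant of a triangular matrix is the product of its diagonal entries and $U$ unitary forces $|\det(U)|^2=1$.

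There is no genuine obstacle here; the only point needing a little care is the bookkeeping of multiplicities — the statement must be read with each eigenvalue repeated according to its algebraic multiplicity, which is precisely what makes the degree-$n$ factorisation of $p_A$ line up with the $n$-fold product on the right-hand side. In our intended application (via Lemma~\ref{1.3}) the relevant matrices are circulant and turn out to be diagonalisable, so this subtlety does not even arise.
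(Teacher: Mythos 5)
Your argument is correct and complete. The paper itself offers no proof of this lemma --- it is simply recalled as a standard fact from linear algebra --- so there is nothing to compare against; your characteristic-polynomial argument (monic degree-$n$ polynomial, full factorisation over $\mathbb{C}$, evaluation at $t=0$) is the standard textbook proof, and your remark about counting eigenvalues with algebraic multiplicity is exactly the right caveat for the statement as written.
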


Combining Lemmas \ref{1.3} and \ref{2.5}, the determinants of circulant matrices can be   easily computed. 

In a similar fashion, a  matrix  $A \in M_n(\mathbb{C})$    is called  a {\em left circulant matrix}  if 
each row of $A$  is   rotated  one element to the left relative to the preceding row.   Precisely,  a complex left circulant matrix is of the form 
\[\left[ {\begin{array}{ccccc}
    a_0 & a_1 & \dots & a_{n-2} & a_{n-1}  \\
    a_1 & a_2 & \dots & a_{n-1} & a_0\\
    \vdots & \vdots & \ddots & \vdots &\vdots\\
    a_{n-2} & a_{n-1} & \dots & a_{n-4} & a_{n-3}\\
    a_{n-1} & a_0 & \dots & a_{n-3} & a_{n-2}
    \end{array} } \right]=:{\rm lcir }(\boldsymbol{a}) , \]
where  $\boldsymbol{a}=(a_0 ,a_1 , \dots ,  a_{n-1})\in \mathbb{C}^n$. 
 
The determinant of an $n\times n$ left circulant matrix can be determined in terms of the  determinant of a  right circulant matrix and $n$ as follows.

\begin{Lemma}[{\cite{rg}}] \label{col:1}
    Let  $\boldsymbol{a}  \in \mathbb{C}^n$. 
    Then 
    \[ {\rm lcir}(\boldsymbol{a}) = H {\rm rcir}(\boldsymbol{a})  ,\]
    where 
    $	H =  \left[ {\begin{array}{cc}
        1 & O_1 \\
        O^T_1 & z \tilde{I}_{(n-1)}\\
        \end{array} } \right],$
    $\tilde{I}_{(n-1)} = {\rm adiag(1,1,\dots,1)_{(n-1)\times (n-1)}  }
    $ and $O_1= (\displaystyle\underbrace{0,0,\dots,0}_{n-1 \text{copies}})$.
\end{Lemma}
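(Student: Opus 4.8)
The plan is to verify the identity by a single entrywise computation, once the action of left-multiplication by $H$ on ${\rm rcir}(\boldsymbol{a})$ is made explicit. Because $H$ is block diagonal, with the $1\times 1$ block $[1]$ in position $(0,0)$ and the anti-diagonal block $z\tilde{I}_{n-1}$ filling the rows and columns indexed $1,\dots,n-1$, forming $H\,{\rm rcir}(\boldsymbol{a})$ amounts to keeping row $0$ of ${\rm rcir}(\boldsymbol{a})$ and replacing, for each $j\in\{1,\dots,n-1\}$, row $j$ by $z$ times row $n-j$ of ${\rm rcir}(\boldsymbol{a})$. So the whole claim reduces to checking that this row rearrangement turns ${\rm rcir}(\boldsymbol{a})$ into ${\rm lcir}(\boldsymbol{a})$.

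To do this cleanly I would read all subscripts of $\boldsymbol{a}$ modulo $n$ and record, directly from the displayed forms in Section 2, that the $(j,k)$ entry of ${\rm rcir}(\boldsymbol{a})$ is $a_{k-j}$ while the $(j,k)$ entry of ${\rm lcir}(\boldsymbol{a})$ is $a_{k+j}$, for $0\le j,k\le n-1$. Then the $(0,k)$ entry of $H\,{\rm rcir}(\boldsymbol{a})$ is just $a_{k}$, which equals ${\rm lcir}(\boldsymbol{a})_{0,k}$. For $1\le j\le n-1$, the unique nonzero entry of row $j$ of $H$ is the entry $z$ sitting on the anti-diagonal of the lower block, namely in column $n-j$; hence the $(j,k)$ entry of $H\,{\rm rcir}(\boldsymbol{a})$ equals $z\cdot {\rm rcir}(\boldsymbol{a})_{n-j,k}=z\,a_{k-(n-j)}=z\,a_{k+j}$, again agreeing with ${\rm lcir}(\boldsymbol{a})_{j,k}$. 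Collecting the two cases gives $H\,{\rm rcir}(\boldsymbol{a})={\rm lcir}(\boldsymbol{a})$.

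The one spot that needs genuine care is locating the anti-diagonal $1$ of $\tilde{I}_{n-1}$ after the block is embedded in the lower-right corner of $H$: one must be sure that global row $j$ (for $1\le j\le n-1$) meets global column $n-j$, and then track the reduction of $k-(n-j)$ to $k+j$ modulo $n$. Everything else is routine bookkeeping. Since the statement is quoted from \cite{rg}, I would keep the written-out proof short, essentially just the index check above.
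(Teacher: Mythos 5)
The paper does not actually prove this lemma---it is quoted from \cite{rg} and used as a black box---so there is no in-paper argument to compare against. Your entrywise verification is the natural proof and is essentially correct: the index conventions ${\rm rcir}(\boldsymbol{a})_{j,k}=a_{k-j}$ and ${\rm lcir}(\boldsymbol{a})_{j,k}=a_{k+j}$ (subscripts mod $n$) match the displayed matrices in Section~2, and your location of the nonzero entry of row $j$ of $H$ in column $n-j$, together with the reduction $k-(n-j)\equiv k+j \pmod n$, is exactly right. The one real blemish is your handling of the scalar $z$: you correctly compute $(H\,{\rm rcir}(\boldsymbol{a}))_{j,k}=z\,a_{k+j}$ for $j\ge 1$ and then assert this ``agrees with'' ${\rm lcir}(\boldsymbol{a})_{j,k}=a_{k+j}$, which is false for general $\boldsymbol{a}$ unless $z=1$. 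The $z$ in the statement is almost certainly a typo carried over from the source (no $z$ is in scope in the lemma, and Corollary~\ref{cor-left} computes $\det(H)=(-1)^{\lfloor\frac{n-1}{2}\rfloor}$ by permuting rows of $H$ into $I_n$, which forces $z=1$); you should say explicitly that the identity holds with $z=1$, i.e.\ with $H=\bigl[\begin{smallmatrix}1&O_1\\O_1^T&\tilde{I}_{(n-1)}\end{smallmatrix}\bigr]$, rather than letting the stray factor silently disappear.
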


\begin{Corollary} \label{cor-left}
     Let  $\boldsymbol{a}  \in \mathbb{C}^n$. 
    Then 
    \[\det( {\rm lcir}(\boldsymbol{a}) )= (-1)^{\lfloor \frac{n-1}{2}\rfloor} \det({\rm rcir}(\boldsymbol{a}) ),\]
    where $\lfloor x \rfloor$ denotes the greatest integer less than or equal to $x$ for all real  numbers $x$.
\end{Corollary}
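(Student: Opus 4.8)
The plan is to deduce the statement straight from Lemma~\ref{col:1} together with the multiplicativity of the determinant. Writing ${\rm lcir}(\boldsymbol{a}) = H\,{\rm rcir}(\boldsymbol{a})$ as in that lemma, we get $\det({\rm lcir}(\boldsymbol{a})) = \det(H)\,\det({\rm rcir}(\boldsymbol{a}))$, so the whole problem collapses to evaluating the single number $\det(H)$, which does not depend on $\boldsymbol{a}$ at all.

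Next I would compute $\det(H)$. Since $H$ is block diagonal with a $1\times 1$ block equal to $1$ in the top-left corner and the anti-diagonal identity $\tilde I_{n-1}$ in the bottom-right corner (the scalar there being $1$ in the left-circulant situation), we have $\det(H) = \det(\tilde I_{n-1})$. Now $\tilde I_{n-1}$ is precisely the permutation matrix of the reversal map $\sigma\colon k\mapsto n-k$ on $\{1,\dots,n-1\}$, so $\det(\tilde I_{n-1}) = {\rm sgn}(\sigma)$. To pin down this sign, pair each $k$ with $n-k$: this exhibits $\sigma$ as a product of $\lfloor \frac{n-1}{2}\rfloor$ disjoint transpositions (with the middle index, when $n$ is odd, left fixed), whence ${\rm sgn}(\sigma) = (-1)^{\lfloor \frac{n-1}{2}\rfloor}$. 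Equivalently, one may count the $\binom{n-1}{2}$ inversions of $\sigma$ and verify, by cases on $n \bmod 4$, that $\binom{n-1}{2} \equiv \lfloor \frac{n-1}{2}\rfloor \pmod 2$.

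Combining the two facts yields $\det({\rm lcir}(\boldsymbol{a})) = (-1)^{\lfloor \frac{n-1}{2}\rfloor}\det({\rm rcir}(\boldsymbol{a}))$, as claimed.

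There is essentially no obstacle here: the argument is a one-line reduction followed by the standard computation of the sign of a reversal permutation. The only points deserving a moment of care are keeping the floor function honest, i.e. matching the parity of the number of transpositions (or of $\binom{n-1}{2}$) with $\lfloor \frac{n-1}{2}\rfloor$, and confirming that the bottom-right block of $H$ contributes no extra scalar factor in the left-circulant setting.
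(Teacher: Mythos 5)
Your proposal is correct and follows essentially the same route as the paper: reduce via Lemma \ref{col:1} to computing $\det(H)$, then identify that determinant as the sign of the reversal permutation, realized as $\lfloor \frac{n-1}{2}\rfloor$ disjoint transpositions. Your added care about the stray scalar in the lower-right block of $H$ (which must indeed be $1$) and the alternative inversion-count check are fine but not needed beyond what the paper does.
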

\begin{proof}
    From Lemma \ref{col:1},  we note that  $\det( {\rm lcir}(\boldsymbol{a}) )=  \det(H) \det({\rm rcir}(\boldsymbol{a}) )$.  By permuting the $i$th and $(n+1-i)$th rows of $H$ for all $i=1,2,\dots, {\lfloor \frac{n-1}{2}\rfloor} $,   the resulting  matrix is $I_n$. It follows that $\det(H)=  (-1)^{\lfloor \frac{n-1}{2}\rfloor} $ and hence  $\det( {\rm lcir}(\boldsymbol{a}) )= (-1)^{\lfloor \frac{n-1}{2}\rfloor} \det({\rm rcir}(\boldsymbol{a}) )$ as desired. 
\end{proof}

\subsection{Left and Right Binomial-Related Circulant  Matrices}

In \cite{R2013},  $n\times n$ left and right circulant matrices whose first row is $\left(  {n-1 \choose 0},{n-1 \choose 1}, \dots, {n-1 \choose n-1}\right)$, the  coefficients in the Binomial expansion of $(x+y)^{n-1}$ have been introduced and the determinants of such matrices have been completely determined.

Here, we focus on a more general  set up.  For a positive integer $n$ and a nonzero complex number $z$,  let  $\boldsymbol{c}_n (z)$ denote the vector of the coefficients in the expansion of  $(x+zy)^{n-1}$. Precisely,  we have $\boldsymbol{c}_n (z)=\left(  {n-1 \choose 0}z^0,{n-1 \choose 1}z^1, \dots, {n-1 \choose n-1}z^{n-1}\right) $.  In this paper,  right and left Binomial-related circulant matrices $  {\rm rcir}(\boldsymbol{c}_n (z) )$ and $ {\rm lcir}(\boldsymbol{c}_n(z)) $ are studied. 

\begin{Example}  Consider  $(x+iy)^4=x^4+4ix^3y-6x^2y^2-4ixy^3+y^4  $. Then we have 
    
 {\small      \[
    {\rm rcir}(\boldsymbol{c}_5(i)) =\left[\begin{array}{ccccc}
   1 &4i&-6&-4i&1\\
   1&1 &4i&-6&-4i\\
  -4i&1&1 &4i&-6\\
 -6 &-4i&1&1 &4i\\
   4i&  -6 &-4i&1&1 \\
    \end{array}\right] \text{~~and~~}
    {\rm lcir}(\boldsymbol{c}_5(i)) =\left[\begin{array}{ccccc}
   1 &4i&-6&-4i&1\\
    4i&-6&-4i&1&1\\
   -6&-4i&1&1&4i\\
  -4i&1&1&4i&-6\\
   1&4i&-6&-4i\\
    \end{array}\right].\]
}
    
\end{Example}

The eigenvalues of   can be determined in the following lemma.
\begin{Lemma} \label{Eigen}
    Let $n\geq 2$ be a positive integer and let  $z$ be a nonzero complex number. Then the eigenvalues of ${\rm rcir}({\boldsymbol{c}_n(z)})$  are of the form  \[\lambda_m=(1+ze^\frac{2m\pi i}{n})^{n-1}\] for all $m = 0,1,2,\dots,n-1$.
\end{Lemma}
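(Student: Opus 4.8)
The plan is to apply Lemma~\ref{1.3} directly to the vector $\boldsymbol{c}_n(z) = \left( \binom{n-1}{0}z^0, \binom{n-1}{1}z^1, \dots, \binom{n-1}{n-1}z^{n-1} \right)$, whose $k$th entry is $a_k = \binom{n-1}{k} z^k$. By Lemma~\ref{1.3}, the eigenvalues of ${\rm rcir}(\boldsymbol{c}_n(z))$ are
\[
\lambda_m = \sum_{k=0}^{n-1} a_k e^{\frac{2km\pi i}{n}} = \sum_{k=0}^{n-1} \binom{n-1}{k} z^k e^{\frac{2km\pi i}{n}}
\]
for $m = 0,1,\dots,n-1$. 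The key observation is that this sum is exactly the Binomial expansion of $\left(1 + z e^{\frac{2m\pi i}{n}}\right)^{n-1}$: writing $w = z e^{\frac{2m\pi i}{n}}$, we have $\sum_{k=0}^{n-1}\binom{n-1}{k} w^k = (1+w)^{n-1}$ by the Binomial Theorem. This gives $\lambda_m = \left(1 + z e^{\frac{2m\pi i}{n}}\right)^{n-1}$, which is the claimed formula.

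First I would state that the entries of $\boldsymbol{c}_n(z)$ are $a_k = \binom{n-1}{k} z^k$ for $k \in \{0,1,\dots,n-1\}$, so that Lemma~\ref{1.3} applies. Then I would substitute into the eigenvalue formula and regroup the powers: $z^k e^{\frac{2km\pi i}{n}} = \left(z e^{\frac{2m\pi i}{n}}\right)^k$. Finally I would invoke the Binomial Theorem on $(1+w)^{n-1}$ with $w = z e^{\frac{2m\pi i}{n}}$ to collapse the sum. Each step is a routine manipulation, so the proof is short.

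Honestly, there is no real obstacle here: the statement is essentially an immediate corollary of Lemma~\ref{1.3} combined with the Binomial Theorem, and the only thing to be careful about is correctly identifying the $k$th coordinate of $\boldsymbol{c}_n(z)$ and matching the exponent bookkeeping (the expansion of $(x+zy)^{n-1}$ has $n$ terms indexed $0$ through $n-1$, matching the size of the circulant). I would keep the write-up to a few lines, emphasizing the substitution $w = z e^{\frac{2m\pi i}{n}}$ as the crux.
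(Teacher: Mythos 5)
Your proposal is correct and follows essentially the same route as the paper: apply Lemma~\ref{1.3} to the coordinates $a_k=\binom{n-1}{k}z^k$ and then collapse the resulting sum via the Binomial Theorem (the paper phrases this as substituting $y=e^{\frac{2m\pi i}{n}}$ into the expansion of $(1+zy)^{n-1}$, which is the same step as your substitution $w=ze^{\frac{2m\pi i}{n}}$). No gaps.
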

\begin{proof} Let    $m\in  \{ 0,1,2,\dots,n-1\}$.
    	From $\boldsymbol{c}_n (z)=\left(  {n-1 \choose 0}z^0,{n-1 \choose 1}z^1, \dots, {n-1 \choose n-1}z^{n-1}\right) $  and Lemma \ref{1.3},  we have 
    \begin{equation}\label{2.2.1}
    \lambda_m  
    = \sum_{k=0}^{n-1} \begin{pmatrix}	n-1\\k \end{pmatrix}z^k (e^\frac{2m \pi i}{n})^k.
    \end{equation}
    Substituting $y=e^\frac{2m\pi i}{n}$  in
    \begin{equation*}
    (1+zy)^{n-1} = \sum_{k=0}^{n-1}\begin{pmatrix}	n-1\\k \end{pmatrix}  z^k y^k,
    \end{equation*}
  we have
        \begin{equation}\label{2.2.3}
     \sum_{k=0}^{n-1} \begin{pmatrix}	n-1\\k \end{pmatrix}z^k(e^\frac{2m\pi i}{n})^k
    = (1+ze^\frac{2m\pi i}{n})^{n-1}
    \end{equation}
   From  \eqref{2.2.1}  and   \eqref{2.2.3}, 
   it follows that  $
    \lambda_m =  (1+ze^\frac{2m\pi i}{n})^{n-1}$.
\end{proof}

For each $A=[a_{ij}]_{n\times n}\in M_n(\mathbb{C})$, we write $\overline{A}=[\overline{a_{ij}}]_{n\times n}$. We have the following  relation  for the determinants of  Binomial-related circulant matrices. 

\begin{Lemma}\label{conj} Let $n\geq 2$  be a positive integer and let $z$ be a nonzero complex number. Then
    \[  \det({\rm rcir}(\boldsymbol{c}_n (\overline{z}) ))= \overline{\det({\rm rcir}(\boldsymbol{c}_n (z) ))}\text{~~~~and~~~~}  \det({\rm lcir}(\boldsymbol{c}_n (\overline{z}) ))= \overline{\det({\rm lcir}(\boldsymbol{c}_n (z) ))}.\]
\end{Lemma}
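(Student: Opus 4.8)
The plan is to reduce everything to the action of complex conjugation on the explicit eigenvalue formula from Lemma~\ref{Eigen}, together with the standard fact that conjugation is a field automorphism of $\mathbb{C}$. First I would treat the right-circulant case. By Lemmas~\ref{Eigen} and \ref{2.5}, we have
\[
\det({\rm rcir}(\boldsymbol{c}_n(z))) = \prod_{m=0}^{n-1}\bigl(1+z e^{\frac{2m\pi i}{n}}\bigr)^{n-1},
\]
and similarly with $z$ replaced by $\overline{z}$. Taking the complex conjugate of the first product and using that conjugation respects products, powers, and sums gives
\[
\overline{\det({\rm rcir}(\boldsymbol{c}_n(z)))} = \prod_{m=0}^{n-1}\bigl(1+\overline{z}\,\overline{e^{\frac{2m\pi i}{n}}}\bigr)^{n-1} = \prod_{m=0}^{n-1}\bigl(1+\overline{z}\, e^{\frac{-2m\pi i}{n}}\bigr)^{n-1}.
\]

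The one genuine point to check is that the last product equals $\prod_{m=0}^{n-1}(1+\overline{z}\,e^{\frac{2m\pi i}{n}})^{n-1}$, i.e. that replacing $m$ by $-m$ merely permutes the index set. This is immediate: the map $m \mapsto -m \bmod n$ is a bijection on $\{0,1,\dots,n-1\}$, and $e^{\frac{-2m\pi i}{n}} = e^{\frac{2(-m \bmod n)\pi i}{n}}$ since $e^{2\pi i} = 1$. Hence the reindexed product is exactly $\det({\rm rcir}(\boldsymbol{c}_n(\overline z)))$, which establishes the first identity. This reindexing step is the only place any thought is required, and it is routine; there is no real obstacle here.

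For the left-circulant case I would simply invoke Corollary~\ref{cor-left}, which gives $\det({\rm lcir}(\boldsymbol{a})) = (-1)^{\lfloor (n-1)/2\rfloor}\det({\rm rcir}(\boldsymbol{a}))$ for every $\boldsymbol{a}\in\mathbb{C}^n$, in particular for $\boldsymbol{a} = \boldsymbol{c}_n(z)$ and for $\boldsymbol{a} = \boldsymbol{c}_n(\overline z)$. Since the scalar $(-1)^{\lfloor (n-1)/2\rfloor}$ is real, it is fixed by conjugation, so
\[
\overline{\det({\rm lcir}(\boldsymbol{c}_n(z)))} = (-1)^{\lfloor (n-1)/2\rfloor}\,\overline{\det({\rm rcir}(\boldsymbol{c}_n(z)))} = (-1)^{\lfloor (n-1)/2\rfloor}\det({\rm rcir}(\boldsymbol{c}_n(\overline z))) = \det({\rm lcir}(\boldsymbol{c}_n(\overline z))),
\]
where the middle equality is the right-circulant identity just proved. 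This completes the proof. Overall the argument is short; the bulk of it is the eigenvalue-product computation plus the elementary observation that negating indices modulo $n$ is a bijection, and I would present it essentially in the two displays above with a sentence justifying the reindexing.
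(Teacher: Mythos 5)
Your argument is correct, but it takes a genuinely different route from the paper's. The paper's proof is purely matrix-theoretic and does not touch eigenvalues at all: it observes that the entries of ${\rm rcir}(\boldsymbol{c}_n(z))$ are $\binom{n-1}{k}z^k$ with real binomial coefficients, so conjugating every entry yields exactly ${\rm rcir}(\boldsymbol{c}_n(\overline{z}))$ (and likewise for the left circulant), and then invokes the identity $\det(\overline{A})=\overline{\det(A)}$. That is a two-line proof handling both the right and left cases simultaneously and uniformly. You instead pass through the spectral factorization of Lemma~\ref{Eigen} combined with Lemma~\ref{2.5}, conjugate the resulting product, and justify that the map $m\mapsto -m \bmod n$ permutes the index set so that the conjugated eigenvalues of ${\rm rcir}(\boldsymbol{c}_n(z))$ are precisely the eigenvalues of ${\rm rcir}(\boldsymbol{c}_n(\overline{z}))$; you then need Corollary~\ref{cor-left} separately for the left case. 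Your reindexing step is valid and the whole argument is sound, and it has the minor virtue of exhibiting explicitly how conjugation acts on the spectrum, which foreshadows how the lemma is used later to pass from $z=i$ to $z=-i$. But it is longer, depends on two additional lemmas plus a corollary, and obscures the fact that the statement is really just the elementary observation that the matrix itself is conjugated entrywise because the binomial coefficients are real. If you were presenting this, the paper's direct argument is preferable; yours would be worth keeping only as a remark that the eigenvalues of the two matrices are complex conjugates of one another after a permutation.
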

\begin{proof} It is not difficult to see that 
   $  {\rm rcir}(\boldsymbol{c}_n (\overline{z}) )= \overline{{\rm rcir}(\boldsymbol{c}_n (z) )}\text{~~~~and~~~~}  {\rm lcir}(\boldsymbol{c}_n (\overline{z}) )= \overline{{\rm lcir}(\boldsymbol{c}_n (z) )}$. Since $\det(\overline{A})=\overline{\det(A)} $ for all $A\in M_n(\mathbb{C})$, the result follows.
\end{proof}

The  determinants  of right and left Binomial-related circulant matrices $  {\rm rcir}(\boldsymbol{c}_n (z) )$ and $ {\rm lcir}(\boldsymbol{c}_n (z)) $ with  $z\in \{1,-1,i,-i\}$ will be determined in Sections 3 and 4.

\subsection{Trigonometric Identities}

In order to determine the determinants of  Binomial-related circulant matrices in the next section,  the following trigonometric identities are required. 

\begin{Lemma} \label{lemEi1}  Let $k$ be a   positive integer. Then 
    \[\prod_{m=1}^{k} \cos \frac{2m\pi}{2k+1} =(-1)^k \prod_{m=1}^{k} \cos \frac{(2m-1)\pi}{2k+1}.\]
\end{Lemma}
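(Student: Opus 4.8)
The plan is to exploit the reflection identity $\cos(\pi-\theta)=-\cos\theta$ to match each factor on the left-hand side with a factor on the right-hand side. First I would observe that both products are carved out of the single family of cosines $\cos\frac{j\pi}{2k+1}$ with $j$ ranging over $\{1,2,\dots,2k\}$: the left-hand product $\prod_{m=1}^{k}\cos\frac{2m\pi}{2k+1}$ collects the $k$ factors with $j$ even, while the right-hand product $\prod_{m=1}^{k}\cos\frac{(2m-1)\pi}{2k+1}$ collects the $k$ factors with $j$ odd.

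Next I would reindex the left-hand product by writing each even index $2m$ as $2m=(2k+1)-(2k+1-2m)$ and, since $2k+1-2m=(2k+1)-(2(k+1-m)-1)$, noting that as $m$ runs from $1$ to $k$ the value $k+1-m$ runs over $\{1,\dots,k\}$. Equivalently, the map $j\mapsto (2k+1)-j$ on $\{1,\dots,2k\}$ is a bijection that swaps the odd and even classes (because $2k+1$ is odd), so it carries the $k$ odd integers onto the $k$ even integers. Applying the substitution together with
\[
\cos\frac{\bigl((2k+1)-(2m-1)\bigr)\pi}{2k+1}=\cos\!\left(\pi-\frac{(2m-1)\pi}{2k+1}\right)=-\cos\frac{(2m-1)\pi}{2k+1},
\]
the product over the even indices becomes the product over the odd indices of $-\cos\frac{(2m-1)\pi}{2k+1}$. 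Pulling out the $k$ resulting sign factors gives exactly $(-1)^{k}\prod_{m=1}^{k}\cos\frac{(2m-1)\pi}{2k+1}$, which is the claimed identity.

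Every step here is either an elementary trigonometric identity or a reindexing, so there is essentially no obstacle; the only point deserving a moment's care is the parity bookkeeping, namely checking that $\{1,\dots,2k\}$ splits into exactly $k$ odd and $k$ even integers and that the reflection $j\mapsto(2k+1)-j$ genuinely interchanges these two classes, so that the number of sign changes produced is precisely $k$. One could also phrase the argument purely factor-by-factor, avoiding any mention of bijections, by simply substituting $2k+1-(2m-1)$ for the even index inside each cosine and simplifying.
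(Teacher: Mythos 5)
Your proof is correct and follows essentially the same route as the paper: both arguments use the reflection $\cos(\pi-\theta)=-\cos\theta$, realized as the index bijection $j\mapsto(2k+1)-j$ (the paper writes it as $m\mapsto k-m+1$), to pair each even-indexed cosine with an odd-indexed one at the cost of one sign each, giving the factor $(-1)^k$.
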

\begin{proof}  Since   $\{1,2,\dots, k\} =\{k-m+1\mid k\in \{ 1,2,\dots, k\}\}$, it follows that 
    \begin{align}
    \label{eq1}
    \prod_{m=1}^k \cos \frac{2m\pi}{2k+1} = \prod_{m=1}^k \cos \frac{2(k-m+1)\pi}{2k+1}.
    \end{align}
    For each  $m\in \{1,2,3,\dots, k\}$, we have 
    \begin{align*}  \cos \frac{(2m-1)\pi}{2k+1}&=  \cos \frac{-(2m-1)\pi}{2k+1}  \\
    &=-\cos \frac{(2k+1-(2m-1))\pi}{2k+1}    \\
    &=   -\cos \frac{2(k-m+1)\pi}{2k+1} .
    \end{align*}
    Hence, 
    \begin{align*}  \prod_{m=1}^k   \cos \frac{(2m-1)\pi}{2k+1} =(-1)^k \prod_{m=1}^k   \cos \frac{2(k-m+1)\pi}{2k+1} .
    \end{align*}
    Together with \eqref{eq1}, we have 
    \[\prod_{m=1}^{k} \cos \frac{2m\pi}{2k+1} =(-1)^k \prod_{m=1}^{k} \cos \frac{(2m-1)\pi}{2k+1}\]
    as desired.
\end{proof}

\begin{Lemma}[{\cite[Equation 4.12]{Trig}}] \label{l2.9} Let $k$ be a positive integer. Then 
    \[\prod_{m=1}^{k} \cos \frac{m\pi}{k+1} =  \frac{\sin \frac{(k+1)\pi}{2}}{2^{k}}.\]
\end{Lemma}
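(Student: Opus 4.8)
The plan is to put $n:=k+1$ and split according to the parity of $n$, since the statement degenerates pleasantly when $n$ is even. If $n=2j$ is even, then the factor of the product indexed by $m=j$ is $\cos\frac{j\pi}{2j}=\cos\frac{\pi}{2}=0$, so the left-hand side is $0$; meanwhile $\sin\frac{n\pi}{2}=\sin(j\pi)=0$, so the right-hand side also vanishes and the identity holds trivially. Thus the real content is the odd case.

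Assume now $n=2j+1$. I would pass to complex exponentials: writing $\zeta=e^{i\pi/n}$ and $\omega=\zeta^{2}=e^{2\pi i/n}$, one has $\cos\frac{m\pi}{n}=\tfrac12\bigl(\zeta^{m}+\zeta^{-m}\bigr)=\tfrac12\,\zeta^{-m}\bigl(1+\omega^{m}\bigr)$ for each $m$. Multiplying over $m=1,\dots,n-1$ and using $\sum_{m=1}^{n-1}m=\tfrac{n(n-1)}{2}$ together with $\zeta^{-n(n-1)/2}=e^{-i\pi(n-1)/2}$ yields
\[
\prod_{m=1}^{n-1}\cos\frac{m\pi}{n}=\frac{1}{2^{\,n-1}}\,e^{-i\pi(n-1)/2}\prod_{m=1}^{n-1}\bigl(1+\omega^{m}\bigr).
\]

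It then remains to evaluate $\prod_{m=1}^{n-1}(1+\omega^{m})$, which I would do by substituting $x=-1$ into $x^{n}-1=\prod_{m=0}^{n-1}(x-\omega^{m})$: since $n$ is odd this gives $-2=(-1)^{n}\prod_{m=0}^{n-1}(1+\omega^{m})=-\prod_{m=0}^{n-1}(1+\omega^{m})$, hence $\prod_{m=0}^{n-1}(1+\omega^{m})=2$, and deleting the $m=0$ term (which equals $2$) leaves $\prod_{m=1}^{n-1}(1+\omega^{m})=1$. Finally, with $n-1=2j$ we get $e^{-i\pi(n-1)/2}=(-1)^{j}$, while on the other side $\sin\frac{n\pi}{2}=\sin\!\bigl(j\pi+\tfrac{\pi}{2}\bigr)=(-1)^{j}$; since $2^{k}=2^{\,n-1}$, both sides equal $(-1)^{j}/2^{\,n-1}$ and the lemma follows.

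The argument is essentially routine once one commits to the exponential form; the only place that asks for care is bookkeeping the power of $\zeta$ and checking that the final answer comes out real, as it must, being a product of real cosines — a handy consistency check rather than a genuine obstacle. Alternatively, avoiding roots of unity, one could telescope via $2\cos\theta=\dfrac{\sin2\theta}{\sin\theta}$ (valid here since no $\sin\frac{m\pi}{n}$ vanishes when $n$ is odd): the map $m\mapsto 2m$ permutes $\{1,\dots,n-1\}$ modulo $n$, so the product collapses to $\pm1$, the sign counting the indices $m$ with $2m>n$; Lemma \ref{lemEi1} could also be invoked to relate the even- and odd-indexed cosine sub-products. I would present the roots-of-unity route as the cleanest.
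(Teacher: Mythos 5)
Your proof is correct, but there is nothing in the paper to compare it against: the paper states this lemma as a quoted identity, citing Gould's table (Equation 4.12 of \cite{Trig}), and gives no proof at all. So what you have done is supply a self-contained argument for an imported fact. The argument checks out at every step: the even case $n=2j$ is indeed trivial since the factor $\cos\frac{\pi}{2}$ kills the left side while $\sin(j\pi)$ kills the right; in the odd case the identity $\cos\frac{m\pi}{n}=\tfrac12\zeta^{-m}(1+\omega^m)$ is correct, the exponent bookkeeping gives $\zeta^{-n(n-1)/2}=e^{-i\pi(n-1)/2}=(-1)^{j}$ for $n-1=2j$, the evaluation $\prod_{m=1}^{n-1}(1+\omega^m)=1$ via $x=-1$ in $x^n-1=\prod_{m=0}^{n-1}(x-\omega^m)$ is valid for odd $n$ (and no factor $1+\omega^m$ vanishes since $\omega^m=-1$ would force $n$ even), and $\sin\frac{(2j+1)\pi}{2}=(-1)^{j}$ matches the phase exactly. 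Your alternative telescoping sketch via $2\cos\theta=\sin 2\theta/\sin\theta$ and the permutation $m\mapsto 2m \bmod n$ also works and gives the same sign count $(-1)^{j}$. Either route would be a reasonable appendix-level proof; the roots-of-unity version you chose is the cleaner one to write down, and it is also more in the spirit of the paper, which already manipulates $n$th roots of unity throughout Section 3.
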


\begin{Lemma} \label{l2.10} Let $k$ be a positive integer. Then 
    \[\left(\prod_{m=1}^{k} \cos \frac{2m\pi}{2k+1} \right)^2= \left(  \frac{1}{4} \right)^k.\]
\end{Lemma}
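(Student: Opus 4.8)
The plan is to deduce this from Lemmas \ref{lemEi1} and \ref{l2.9} by a single multiplicative trick. The key observation is that the index sets $\{2,4,\dots,2k\}$ and $\{1,3,\dots,2k-1\}$ partition $\{1,2,\dots,2k\}$, so that
\[
\left(\prod_{m=1}^{k} \cos \frac{2m\pi}{2k+1}\right)\left(\prod_{m=1}^{k} \cos \frac{(2m-1)\pi}{2k+1}\right) = \prod_{j=1}^{2k} \cos \frac{j\pi}{2k+1}.
\]
First I would apply Lemma \ref{l2.9} with $k$ replaced by $2k$ to evaluate the right-hand side as $\dfrac{\sin\frac{(2k+1)\pi}{2}}{2^{2k}}$, and then simplify $\sin\frac{(2k+1)\pi}{2} = \sin\!\left(k\pi + \frac{\pi}{2}\right) = \cos(k\pi) = (-1)^k$, so that the right-hand side equals $\dfrac{(-1)^k}{4^k}$.

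Next I would invoke Lemma \ref{lemEi1} to rewrite the second factor on the left-hand side, namely $\prod_{m=1}^{k} \cos \frac{(2m-1)\pi}{2k+1} = (-1)^k \prod_{m=1}^{k} \cos \frac{2m\pi}{2k+1}$. Substituting this in yields
\[
(-1)^k \left(\prod_{m=1}^{k} \cos \frac{2m\pi}{2k+1}\right)^2 = \frac{(-1)^k}{4^k},
\]
and cancelling the common factor $(-1)^k$ from both sides gives the asserted identity $\left(\prod_{m=1}^{k} \cos \frac{2m\pi}{2k+1}\right)^2 = \left(\frac14\right)^k$.

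There is essentially no obstacle in this argument; the only points requiring a little care are verifying that the two index sets do form a disjoint union equal to $\{1,2,\dots,2k\}$ (so that the product of the two subproducts collapses to $\prod_{j=1}^{2k}\cos\frac{j\pi}{2k+1}$) and correctly handling the parity in the evaluation $\sin\frac{(2k+1)\pi}{2} = (-1)^k$. Both Lemmas \ref{lemEi1} and \ref{l2.9} are already available, so once these bookkeeping details are checked the proof is immediate.
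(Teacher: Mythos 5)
Your proposal is correct and follows essentially the same route as the paper's own proof: both combine the even- and odd-index cosine products into $\prod_{m=1}^{2k}\cos\frac{m\pi}{2k+1}$, evaluate it via Lemma \ref{l2.9} as $\frac{(-1)^k}{4^k}$, and use Lemma \ref{lemEi1} to introduce the factor $(-1)^k$ that then cancels. The only difference is presentational (the paper starts from $(-1)^k(\prod\cos\frac{2m\pi}{2k+1})^2$ and works forward, while you assemble the same pieces in the reverse order).
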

\begin{proof} By Lemma \ref{lemEi1},   it can be deduced that 
    \begin{align*} (-1)^k\left(\prod_{m=1}^{k} \cos \frac{2m\pi}{2k+1} \right)^2 &=\prod_{m=1}^{k} \cos \frac{(2m-1)\pi}{2k+1} \prod_{m=1}^{k} \cos \frac{2m\pi}{2k+1} \\
    &=\prod_{m=1}^{2k} \cos \frac{m\pi}{2k+1}\\
    &=   \frac{\sin\frac{(2k+1)\pi}{2}}{2^{2k}}  ~~~~~~~~~~~~~~~~~~~~~~\text{by Lemma \ref{l2.9},}\\
    &=\left(-\frac{1}{4}\right)^k.
    \end{align*}
    Hence,   we have \[\left(\prod_{m=1}^{k} \cos \frac{2m\pi}{2k+1} \right)^2  =\left(\frac{1}{4}\right)^k\]
    as desired.
\end{proof}

\section{Determinants  of  Binomial-Related Right Circulant Matrices}
In this section,  we focus on the determinants of   Binomial-related right  circulant matrices  in the cases where $z\in \{-1,i,-i\}$.  In the case where $z=1$, the result has been given in \cite{R2013}.

\subsection{Right  Circulant Matrices from the  Coefficients of $(x+y)^{n-1}$ and $(x-y)^{n-1}$}

By substituting $z=1$ and $z=-1$ in Lemma \ref{Eigen}, the next lemma follows. 
\begin{Lemma} \label{1_-1}
    Let $n\geq 2$ be a positive integer. Then the following statements hold.
    \begin{enumerate}
    \item The eigenvalues of ${\rm rcir}({\boldsymbol{c}_n(1)})$  are of the form  $\lambda_m=(1+e^\frac{2m\pi i}{n})^{n-1}$  for all $m = 0,1,2,\dots,n-1$.
    \item  The eigenvalues of ${\rm rcir}({\boldsymbol{c}_n(-1)})$  are of the form  $\lambda_m=(1-e^\frac{2m\pi i}{n})^{n-1}$  for all $m = 0,1,2,\dots,n-1$.
    \end{enumerate}
\end{Lemma}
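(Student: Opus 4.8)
The plan is to read off both statements directly from Lemma~\ref{Eigen} by specializing the parameter $z$. Since $1$ and $-1$ are nonzero complex numbers, Lemma~\ref{Eigen} applies verbatim to each of them, so essentially no new work is required; the role of this lemma is only to record the two specializations in a convenient form for use in the subsequent subsections.

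For part~(1), I would set $z=1$ in Lemma~\ref{Eigen}. The hypothesis $n\geq 2$ is inherited, and the conclusion states that the eigenvalues of ${\rm rcir}(\boldsymbol{c}_n(1))$ are $\lambda_m=(1+1\cdot e^{2m\pi i/n})^{n-1}=(1+e^{2m\pi i/n})^{n-1}$ for all $m=0,1,\dots,n-1$, which is exactly the claimed formula. For part~(2), I would set $z=-1$ in Lemma~\ref{Eigen}; then the eigenvalues of ${\rm rcir}(\boldsymbol{c}_n(-1))$ are $\lambda_m=(1+(-1)\cdot e^{2m\pi i/n})^{n-1}=(1-e^{2m\pi i/n})^{n-1}$ for all $m=0,1,\dots,n-1$, as claimed.

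There is no genuine obstacle here: the only thing to check is that the substitutions $z=1$ and $z=-1$ are legitimate, i.e.\ that both values are nonzero complex numbers, which is immediate. If one wished to be fully self-contained, one could alternatively retrace the short argument in the proof of Lemma~\ref{Eigen}, plugging in $\boldsymbol{c}_n(1)=\left({n-1\choose 0},{n-1\choose 1},\dots,{n-1\choose n-1}\right)$ and $\boldsymbol{c}_n(-1)=\left({n-1\choose 0},-{n-1\choose 1},\dots,(-1)^{n-1}{n-1\choose n-1}\right)$ into Lemma~\ref{1.3} and using the binomial expansions of $(1+y)^{n-1}$ and $(1-y)^{n-1}$ evaluated at $y=e^{2m\pi i/n}$; but invoking Lemma~\ref{Eigen} as a black box is cleaner.
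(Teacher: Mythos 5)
Your proposal is correct and matches the paper exactly: the paper states this lemma with the remark that it follows ``by substituting $z=1$ and $z=-1$ in Lemma~\ref{Eigen},'' which is precisely your specialization argument. No further comment is needed.
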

Based on Lemma \ref{1_-1}, the determinant of ${\rm rcir}({\boldsymbol{c}_n(1)})$ has been given in \cite{R2013}.

\begin{Proposition}[{\cite[Theorem 2.1]{R2013}}] \label{prop1} Let $n\geq 2$ be a positive integer. Then
    	\begin{equation*}
    \det({\rm rcir}({\boldsymbol{c}_n(1)}))= (1+(-1)^{n-1})2^{n-2}.
    \end{equation*}
\end{Proposition}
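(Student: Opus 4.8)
The plan is to read off the determinant directly from the eigenvalue description. By Lemma \ref{1_-1}(1) the eigenvalues of ${\rm rcir}(\boldsymbol{c}_n(1))$ are $\lambda_m = \bigl(1+e^{2m\pi i/n}\bigr)^{n-1}$ for $m = 0,1,\dots,n-1$, so Lemma \ref{2.5} gives
\[
\det\bigl({\rm rcir}(\boldsymbol{c}_n(1))\bigr) = \prod_{m=0}^{n-1}\lambda_m = \left(\prod_{m=0}^{n-1}\bigl(1+e^{2m\pi i/n}\bigr)\right)^{\!n-1}.
\]
Thus the whole computation reduces to evaluating the single product $P_n := \prod_{m=0}^{n-1}\bigl(1+e^{2m\pi i/n}\bigr)$.

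To evaluate $P_n$ I would use the factorization $x^n - 1 = \prod_{m=0}^{n-1}\bigl(x - e^{2m\pi i/n}\bigr)$ over $\mathbb{C}$. Substituting $x = -1$ yields
\[
(-1)^n - 1 = \prod_{m=0}^{n-1}\bigl(-1 - e^{2m\pi i/n}\bigr) = (-1)^n\prod_{m=0}^{n-1}\bigl(1 + e^{2m\pi i/n}\bigr) = (-1)^n P_n,
\]
hence $P_n = 1 - (-1)^n$; that is, $P_n = 0$ if $n$ is even and $P_n = 2$ if $n$ is odd.

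It then remains to match $P_n^{\,n-1}$ with $(1+(-1)^{n-1})2^{n-2}$ by a short case analysis on the parity of $n$. If $n$ is even, then $(-1)^{n-1} = -1$, so the right-hand side is $0$; and since $n \geq 2$ the exponent $n-1$ is positive, so $\det({\rm rcir}(\boldsymbol{c}_n(1))) = P_n^{\,n-1} = 0$ as well. If $n$ is odd, then $(-1)^{n-1} = 1$, so the right-hand side is $2\cdot 2^{n-2} = 2^{n-1}$, while $\det({\rm rcir}(\boldsymbol{c}_n(1))) = 2^{n-1}$; the two sides agree, completing the proof.

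There is no real obstacle in this argument; the only point needing a little attention is the even case, where one must note that $n-1 \geq 1$ so that the vanishing factor $1 + e^{i\pi} = 0$ (coming from $m = n/2$) indeed forces the whole product to be $0$. An alternative, fully trigonometric route --- more in the spirit of the Trigonometric Identities subsection --- would write $1 + e^{2m\pi i/n} = 2\cos\frac{m\pi}{n}\,e^{im\pi/n}$ and evaluate $\prod_{m=1}^{n-1}\cos\frac{m\pi}{n}$ via Lemma \ref{l2.9}; this is essentially the template that the later computations for $z = -1$ and $z = \pm i$ will follow.
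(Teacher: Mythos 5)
Your proof is correct. Note that the paper does not actually prove this proposition --- it is quoted from \cite{R2013} (Rabago, Theorem 2.1), with only the remark that it is ``based on Lemma \ref{1_-1}'' --- so there is no internal proof to compare against; you have supplied one. Your argument stays within the paper's framework (eigenvalues via Lemmas \ref{1_-1} and \ref{2.5}), but your evaluation of $\prod_{m=0}^{n-1}\bigl(1+e^{2m\pi i/n}\bigr)$ by plugging $x=-1$ into $x^n-1=\prod_{m=0}^{n-1}(x-e^{2m\pi i/n})$ is slicker and more elementary than the route the paper takes for the analogous $z=\pm i$ cases, where it pairs $\lambda_m$ with $\lambda_{n-m}$ and invokes the cosine-product identities of Lemmas \ref{l2.9} and \ref{l2.10}. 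The polynomial trick buys a uniform two-line evaluation with no trigonometry; the paper's pairing method is what generalizes to $z=i$, where no such clean factorization is available. All steps check out, including the observation that $n-1\geq 1$ is needed for the even case.
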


From Lemma \ref{1_-1}, observe that the eigenvalue $\lambda_0$ of ${\rm rcir}({\boldsymbol{c}_n(-1)})$ is  $\lambda_0=(1-e^0)^{n-1}=0$.  By Lemmas \ref{1.3} and \ref{2.5},     the determinant of  ${\rm rcir}({\boldsymbol{c}_n(-1)})$ follows immediately.
\begin{Proposition}\label{prop-1} Let $n\geq 2$ be a positive integer. Then
    \begin{equation*}
    \det({\rm rcir}({\boldsymbol{c}_n(-1)}))= 0.
    \end{equation*}
\end{Proposition}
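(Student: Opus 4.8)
The plan is to read off the determinant directly from the eigenvalue formula already established for ${\rm rcir}(\boldsymbol{c}_n(-1))$ in Lemma~\ref{1_-1}(2). By that lemma, the eigenvalues are $\lambda_m = (1 - e^{2m\pi i/n})^{n-1}$ for $m = 0, 1, \dots, n-1$. The crucial observation is that when $m = 0$ we get $\lambda_0 = (1 - e^{0})^{n-1} = (1 - 1)^{n-1} = 0^{n-1} = 0$, since $n \geq 2$ guarantees the exponent $n-1$ is a positive integer.

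Next I would invoke Lemma~\ref{2.5}, which expresses the determinant of any matrix as the product of its eigenvalues: $\det({\rm rcir}(\boldsymbol{c}_n(-1))) = \prod_{m=0}^{n-1} \lambda_m$. Since one of the factors in this product, namely $\lambda_0$, equals $0$, the entire product vanishes regardless of the values of the remaining eigenvalues $\lambda_1, \dots, \lambda_{n-1}$. Hence $\det({\rm rcir}(\boldsymbol{c}_n(-1))) = 0$, which is exactly the claimed statement.

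There is essentially no obstacle here; this proposition is an immediate corollary of the machinery set up in the preliminaries. The only point that requires the hypothesis $n \geq 2$ is ensuring $n - 1 \geq 1$ so that $0^{n-1} = 0$ rather than the indeterminate $0^0$; the excerpt already fixes this convention by assuming $n > 1$ throughout. One could optionally remark on the underlying reason: the polynomial $(x - y)^{n-1}$ vanishes at $x = y$, so the all-ones vector $(1, 1, \dots, 1)$ lies in the kernel of ${\rm rcir}(\boldsymbol{c}_n(-1))$, making the matrix singular. This is just a restatement of the $\lambda_0 = 0$ computation, since $\lambda_0$ is precisely the eigenvalue associated with the eigenvector $(1, 1, \dots, 1)$.
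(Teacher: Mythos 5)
Your proof is correct and follows exactly the paper's argument: observe from Lemma~\ref{1_-1} that $\lambda_0=(1-e^0)^{n-1}=0$ and then apply Lemma~\ref{2.5} to conclude the product of eigenvalues vanishes. The extra remark about the all-ones eigenvector is a nice aside but not needed.
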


\subsection{Right  Circulant Matrices from  the Coefficients of $(x+iy)^{n-1}$}
In this subsection, we focus on the determinant of $\boldsymbol{c}_n (i)$.  The results are given in terms of the residues of $n$ modulo $4$.

  By setting $z=i$ in   Lemma \ref{Eigen},  we have the following lemma.

\begin{Lemma} \label{2.2}
    Let $n\geq 2$ be a positive integer.  Then the eigenvalues of ${\rm rcir}({\boldsymbol{c}_n(i)})$  are of the form  $\lambda_m=(1+ie^\frac{2m\pi i}{n})^{n-1}$  for all $m = 0,1,2,\dots,n-1$.
\end{Lemma}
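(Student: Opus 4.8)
The plan is to obtain this as an immediate specialization of Lemma~\ref{Eigen}. First I would check that the hypotheses of Lemma~\ref{Eigen} are met with the choice $z = i$: indeed $i$ is a nonzero complex number (in fact $|i| = 1$), so Lemma~\ref{Eigen} applies verbatim. Substituting $z = i$ into the stated formula $\lambda_m = (1 + z e^{2m\pi i/n})^{n-1}$ then yields $\lambda_m = (1 + i e^{2m\pi i/n})^{n-1}$ for every $m \in \{0, 1, \dots, n-1\}$, which is exactly the assertion.

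If one prefers a self-contained argument rather than citing Lemma~\ref{Eigen}, I would instead start from Lemma~\ref{1.3}: since $\boldsymbol{c}_n(i) = \left(\binom{n-1}{0} i^0, \binom{n-1}{1} i^1, \dots, \binom{n-1}{n-1} i^{n-1}\right)$, the eigenvalues of ${\rm rcir}(\boldsymbol{c}_n(i))$ are
\[
\lambda_m = \sum_{k=0}^{n-1} \binom{n-1}{k} i^k \left(e^{2m\pi i/n}\right)^k
\]
for $m = 0, 1, \dots, n-1$. Then I would recognise the right-hand side, via the binomial theorem $(1 + iy)^{n-1} = \sum_{k=0}^{n-1} \binom{n-1}{k} i^k y^k$ evaluated at $y = e^{2m\pi i/n}$, as $(1 + i e^{2m\pi i/n})^{n-1}$, completing the proof.

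There is no real obstacle here: the statement is a routine instance of the general eigenvalue computation already established, and the only thing to verify is the (trivial) admissibility of $z = i$. The substance of the work lies not in this lemma but in the subsequent analysis of the product $\prod_{m=0}^{n-1}(1 + i e^{2m\pi i/n})^{n-1}$ that will compute the determinant, where the trigonometric identities of Lemmas~\ref{lemEi1}, \ref{l2.9}, and \ref{l2.10} come into play and the answer splits according to the residue of $n$ modulo $4$.
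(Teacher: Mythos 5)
Your proposal is correct and matches the paper exactly: the paper obtains this lemma by setting $z=i$ in Lemma~\ref{Eigen}, which is precisely your primary argument, and your self-contained alternative simply reproduces the proof of Lemma~\ref{Eigen} specialized to $z=i$. Nothing is missing.
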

 
The following properties of the eigenvalues of  ${\rm rcir}({\boldsymbol{c}_n(i)})$   are key to prove the main results.

\begin{Lemma}  \label{l3.5} Let $n\geq 2$ be a positive integer. Then
    \[\lambda_m\lambda_{n-m}= \left( 2i \cos \frac{2m\pi }{n}\right)^{n-1} \]
    for all $1\leq m <n$, where $\lambda_m$ and  $\lambda_{n-m}$ are given  in Lemma \ref{2.2} 
\end{Lemma}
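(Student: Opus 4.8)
The plan is to compute the product $\lambda_m \lambda_{n-m}$ directly from the formula in Lemma \ref{2.2}, namely $\lambda_m = (1 + i e^{2m\pi i/n})^{n-1}$. Since the exponent $n-1$ is shared, it suffices to evaluate $(1 + i e^{2m\pi i/n})(1 + i e^{2(n-m)\pi i/n})$ and then raise the result to the power $n-1$. The key observation is that $e^{2(n-m)\pi i/n} = e^{2\pi i} e^{-2m\pi i/n} = e^{-2m\pi i/n}$, so the second factor is the one obtained from the first by replacing $e^{2m\pi i/n}$ with its conjugate (reciprocal on the unit circle).

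First I would write $\omega = e^{2m\pi i/n}$, so that $\lambda_m = (1+i\omega)^{n-1}$ and $\lambda_{n-m} = (1 + i\omega^{-1})^{n-1}$. Then
\begin{align*}
(1 + i\omega)(1 + i\omega^{-1}) &= 1 + i\omega^{-1} + i\omega + i^2 \omega \omega^{-1} \\
&= 1 + i(\omega + \omega^{-1}) - 1 \\
&= i(\omega + \omega^{-1}).
\end{align*}
Next I would use Euler's formula: $\omega + \omega^{-1} = e^{2m\pi i/n} + e^{-2m\pi i/n} = 2\cos\frac{2m\pi}{n}$. Hence the product of the two linear factors equals $2i\cos\frac{2m\pi}{n}$, and therefore
\[
\lambda_m \lambda_{n-m} = \big[(1+i\omega)(1+i\omega^{-1})\big]^{n-1} = \left(2i\cos\frac{2m\pi}{n}\right)^{n-1},
\]
which is exactly the claimed identity. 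The hypothesis $1 \le m < n$ guarantees that $n-m$ is again an index in $\{1,2,\dots,n-1\}$, so both $\lambda_m$ and $\lambda_{n-m}$ are among the eigenvalues listed in Lemma \ref{2.2}, and no separate treatment of $m = 0$ or $m = n$ is needed.

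I do not anticipate a genuine obstacle here: the whole proof is a short algebraic manipulation followed by one application of Euler's formula. The only minor point to be careful about is the index bookkeeping — confirming that $e^{2(n-m)\pi i/n} = e^{-2m\pi i/n}$ via periodicity of the exponential — and making sure the factor $i^2 = -1$ cancels the constant $1$ cleanly so that the cross terms $i(\omega + \omega^{-1})$ survive. Everything else is immediate.
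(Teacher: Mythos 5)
Your proposal is correct and follows essentially the same route as the paper: expand the product $(1+ie^{2m\pi i/n})(1+ie^{2(n-m)\pi i/n})$, note that $i^2\cdot e^{2m\pi i/n}e^{-2m\pi i/n}=-1$ cancels the constant term, and apply Euler's formula to obtain $2i\cos\frac{2m\pi}{n}$ before raising to the power $n-1$. The only cosmetic difference is your use of the abbreviation $\omega=e^{2m\pi i/n}$.
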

\begin{proof}  
    Let $m$ be an integer such that $1\leq m<n$. Then 
    \begin{align*}
    \lambda_m\lambda_{n-m}&=(1+ie^\frac{2m\pi i}{n})^{n-1}(1+ie^\frac{2(n-m)\pi i}{n})^{n-1}\\
    &=\left((1+ie^\frac{2m\pi i}{n})(1+ie^\frac{2(n-m)\pi i}{n})\right)^{n-1}\\
    &=\left((1+i(e^\frac{2m\pi i}{n}+e^\frac{2(n-m)\pi i}{n}) -1\right)^{n-1}\\
    &=\left(i(e^\frac{2m\pi i}{n}+e^\frac{-2m\pi i}{n}) \right)^{n-1}\\
    &=\left( 2i \cos \frac{2m\pi }{n}\right)^{n-1}
    \end{align*}
   by the Euler's formula.
\end{proof}

\begin{Lemma} \label{lem:propSpecLam}Let $n\geq 2$ be a  positive integer. Then the following statements hold.
    \begin{enumerate}
        \item If $n$ is odd, then
        \[\lambda_0=\left(2i\right)^{\frac{n-1}{2}}.\]
        \item  If $n$ is even, then 
        \[\lambda_0\lambda_{\frac{n}{2}} = 2^{n-1}.\]
        \item  If $n\equiv 0\,{\rm mod}\,4$, then 
        \[ \lambda_{\frac{n}{4}}  =0.\]
    \end{enumerate} 
\end{Lemma}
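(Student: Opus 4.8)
The plan is to obtain all three identities by a single mechanism: substitute the relevant index into the eigenvalue formula $\lambda_m=(1+ie^{2m\pi i/n})^{n-1}$ from Lemma~\ref{2.2}, evaluate the resulting root of unity by Euler's formula, and simplify a power of $1\pm i$. The only algebraic facts needed are $(1+i)^2=1+2i+i^2=2i$ and $(1+i)(1-i)=1-i^2=2$, together with the observation that $n-1\geq 1$, so that a vanishing base forces the whole power to vanish.

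First I would handle~(1): here $\lambda_0=(1+ie^{0})^{n-1}=(1+i)^{n-1}$, and since $n$ is odd the exponent $n-1$ is even, so $\lambda_0=\bigl((1+i)^2\bigr)^{(n-1)/2}=(2i)^{(n-1)/2}$. For~(2), with $n$ even the index $\tfrac n2$ is admissible and $e^{2(n/2)\pi i/n}=e^{\pi i}=-1$, whence $\lambda_{n/2}=(1-i)^{n-1}$; multiplying with $\lambda_0=(1+i)^{n-1}$ and collecting exponents gives $\lambda_0\lambda_{n/2}=\bigl((1+i)(1-i)\bigr)^{n-1}=2^{n-1}$. For~(3), when $4\mid n$ the index $\tfrac n4$ is an integer and $e^{2(n/4)\pi i/n}=e^{\pi i/2}=i$, so $\lambda_{n/4}=(1+i\cdot i)^{n-1}=(1+i^2)^{n-1}=0$.

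I do not anticipate any genuine obstacle: each part is a one-line substitution. The only points demanding attention are purely bookkeeping — checking that the special indices $\tfrac n2$ and $\tfrac n4$ really are integers lying in $\{0,1,\dots,n-1\}$ under the respective hypotheses, and that $n\geq 2$ is used in~(3) so that $0^{n-1}=0$ is legitimate. These three facts are precisely the boundary inputs that complement Lemma~\ref{l3.5} in the product-of-eigenvalues computations underlying the determinant formulas that follow, which is why it is convenient to isolate them here.
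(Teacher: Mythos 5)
Your proposal is correct and follows exactly the same route as the paper: substitute $m=0$, $m=\tfrac n2$, $m=\tfrac n4$ into the eigenvalue formula of Lemma~\ref{2.2}, evaluate the root of unity, and simplify using $(1+i)^2=2i$, $(1+i)(1-i)=2$, and $1+i^2=0$. The extra bookkeeping remarks about the indices being admissible and $n-1\geq 1$ are fine but not needed beyond what the paper already does.
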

\begin{proof}  Assume that $n$ is odd. Then $n-1$ is even and hence 
    \begin{align*}
    \lambda_0&=(1+i)^{n-1}\\
    &=\left((1+i)^2\right)^{\frac{n-1}{2}}\\
    &=\left(1+2i-1\right)^{\frac{n-1}{2}}\\
    &=\left(2i\right)^{\frac{n-1}{2}}.
    \end{align*}
   Assume that $n$ is even. Then
    \begin{align*} \lambda_0\lambda_{\frac{n}{2}}&=(1+i)^{n-1} (1+ie^ {\pi i})^{n-1} \\
    &= (1+i)^{n-1} (1-i)^{n-1} \\
    &=2^{n-1}.
    \end{align*}
    Assume that $n\equiv 0\,{\rm mod }\, 4$. Then
    \begin{align*}  \lambda_{\frac{n}{4}}&=  (1+ie^ {\frac{\pi i}{2}})^{n-1} \\
    &= (1+i^2)^{n-1}  \\
    &=0.
    \end{align*}
    The lemma is proved.
\end{proof}

Next, the determinant of ${\rm rcir}({\boldsymbol{c}_n(i)})$ is determined in the following four cases.

\begin{Proposition}  If $n \equiv 0\,{\rm mod}\, 4$, then 
    \[\det({\rm rcir}({\boldsymbol{c}_n(i)}))=0.\]
\end{Proposition}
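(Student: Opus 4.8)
The plan is to combine Lemma~\ref{2.2}, Lemma~\ref{2.5}, and part~(3) of Lemma~\ref{lem:propSpecLam}. Since $n\equiv 0\,{\rm mod}\,4$, the index $\frac{n}{4}$ is an integer lying strictly between $0$ and $n-1$, so $\lambda_{\frac{n}{4}}$ is genuinely one of the eigenvalues of ${\rm rcir}(\boldsymbol{c}_n(i))$ listed in Lemma~\ref{2.2}. Part~(3) of Lemma~\ref{lem:propSpecLam} tells us that $\lambda_{\frac{n}{4}}=0$.

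From there I would invoke Lemma~\ref{2.5}, which expresses $\det({\rm rcir}(\boldsymbol{c}_n(i)))$ as the product $\prod_{m=0}^{n-1}\lambda_m$ of all the eigenvalues. Because one of the factors in this product is $\lambda_{\frac{n}{4}}=0$, the entire product vanishes, and therefore $\det({\rm rcir}(\boldsymbol{c}_n(i)))=0$. This is essentially a one-line argument once the lemmas are in hand.

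There is no real obstacle here; the only point requiring a moment's care is to confirm that $\frac{n}{4}\in\{0,1,\dots,n-1\}$ so that the vanishing eigenvalue actually appears in the eigenvalue list — this is immediate since $n\geq 4$ in this case, giving $1\leq \frac{n}{4}\leq n-1$. One could equivalently observe directly that $\lambda_{n/4}=(1+ie^{\pi i/2})^{n-1}=(1+i^2)^{n-1}=0$ without citing Lemma~\ref{lem:propSpecLam}, but reusing the already-proved lemma keeps the proof shortest.
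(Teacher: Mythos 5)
Your proposal is correct and follows exactly the paper's argument: Lemma~\ref{lem:propSpecLam}(3) gives $\lambda_{\frac{n}{4}}=0$, and Lemma~\ref{2.5} then forces the product of eigenvalues, hence the determinant, to vanish. The extra check that $\frac{n}{4}\in\{0,1,\dots,n-1\}$ is a sensible (if implicit in the paper) touch, but otherwise the two proofs coincide.
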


\begin{proof}
    Assume that   $n\geq 2$  is a positive integer such that  $n \equiv 0\,{\rm mod}\, 4$.  By Lemma \ref{lem:propSpecLam}, we have $\lambda_{\frac{n}{4}}=0$ and hence
    \[\det({\rm rcir}({\boldsymbol{c}_n(i)}))=\prod_{m=0}^{n-1} \lambda_m=0\] 
    by Lemma \ref{2.5}. 
\end{proof}

\begin{Proposition}  If $n \equiv 1\,{\rm mod}\, 4$, then 
    \[\det({\rm rcir}({\boldsymbol{c}_n(i)}))=(2i)^{\frac{n-1}{2}} .\]
\end{Proposition}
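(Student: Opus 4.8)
The plan is to compute $\det({\rm rcir}(\boldsymbol{c}_n(i)))=\prod_{m=0}^{n-1}\lambda_m$ via Lemmas~\ref{2.5} and \ref{2.2}, pairing the eigenvalues $\lambda_m$ and $\lambda_{n-m}$ and using the product formula from Lemma~\ref{l3.5}. Since $n\equiv 1\,{\rm mod}\,4$, write $n=2k+1$ with $k$ even; in particular $n$ is odd, so $m=0$ is the only fixed point of the involution $m\mapsto n-m$ on $\{0,1,\dots,n-1\}$, and the remaining indices split into $k$ pairs $\{m,n-m\}$ for $m=1,2,\dots,k$. Hence
\[
\det({\rm rcir}(\boldsymbol{c}_n(i)))=\lambda_0\prod_{m=1}^{k}\lambda_m\lambda_{n-m}
=\lambda_0\prod_{m=1}^{k}\left(2i\cos\frac{2m\pi}{n}\right)^{n-1}.
\]

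Next I would substitute $\lambda_0=(2i)^{\frac{n-1}{2}}=(2i)^k$ from Lemma~\ref{lem:propSpecLam}(1) and simplify the product. Pulling out the constants gives $(2i)^{k(n-1)}=(2i)^{2k^2}$ times $\left(\prod_{m=1}^{k}\cos\frac{2m\pi}{n}\right)^{n-1}=\left(\prod_{m=1}^{k}\cos\frac{2m\pi}{2k+1}\right)^{2k}$. By Lemma~\ref{l2.10}, $\left(\prod_{m=1}^{k}\cos\frac{2m\pi}{2k+1}\right)^{2}=\left(\frac14\right)^k$, so the $2k$-th power of that product is $\left(\frac14\right)^{k^2}=2^{-2k^2}$. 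Therefore
\[
\det({\rm rcir}(\boldsymbol{c}_n(i)))=(2i)^k\cdot (2i)^{2k^2}\cdot 2^{-2k^2}=(2i)^k\cdot i^{2k^2}=(2i)^k\cdot(-1)^{k^2}.
\]
Since $k$ is even (as $n\equiv 1\,{\rm mod}\,4$), $(-1)^{k^2}=1$ and $(2i)^k=(2i)^{\frac{n-1}{2}}$, which yields the claimed value $(2i)^{\frac{n-1}{2}}$.

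The main obstacle I anticipate is bookkeeping the exponents correctly: one must keep $n-1=2k$ and $\frac{n-1}{2}=k$ straight, track that raising the cosine product to the power $n-1=2k$ (rather than to the power $2$) converts the $\left(\frac14\right)^k$ from Lemma~\ref{l2.10} into $\left(\frac14\right)^{k^2}$, and verify that the powers of $i$ collapse using the parity of $k$. A secondary subtlety is justifying that Lemma~\ref{l2.10} applies with its $k$ equal to our $k=\frac{n-1}{2}$, i.e.\ that $2k+1=n$, which is immediate. I would also note in passing that $\lambda_0\ne 0$ and each $\cos\frac{2m\pi}{n}\ne 0$ for $1\le m\le k$ when $n$ is odd, so no factor vanishes and the determinant is indeed nonzero, consistent with the stated formula.
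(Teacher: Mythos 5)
Your proposal is correct and follows essentially the same route as the paper: pair $\lambda_m$ with $\lambda_{n-m}$, pull out $\lambda_0=(2i)^{\frac{n-1}{2}}$, and reduce the cosine product via Lemma~\ref{l2.10}. The only difference is notational bookkeeping (you write $n=2k+1$ with $k$ even and track $(-1)^{k^2}$, while the paper writes $n=4a+1$ and absorbs $i^{4a}=1$ directly), which does not change the argument.
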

\begin{proof} Let $n\geq 2$ be a positive integer such that $n \equiv 1\,{\rm mod}\, 4$.  Then $n=4a+1$ for some positive integer $a$. 
  By Lemmas \ref{1.3} and \ref{2.5}, we have 
  
    \begin{align*}
    \det ({\rm rcir}({\boldsymbol{c}_n(i)}))&= \prod_{m=0}^{n-1} \lambda_m\\
    &=\lambda_0 \prod_{m=1}^{\frac{n-1}{2}} \lambda_m\lambda_{n-m}\\
    &= \left(2i\right)^{\frac{n-1}{2}}  \prod_{m=1}^{ \frac{n-1}{2}} \left( 2i \cos \frac{2m\pi }{n}\right)^{n-1}~~~~\text{ by Lemmas  \ref{l3.5} and \ref{lem:propSpecLam},}\\
    &= \left(2i\right)^{\frac{n-1}{2}}  \left(  \prod_{m=1}^{ 2a} 2i \cos \frac{2m\pi }{4a+1}\right)^{4a}\\
    &= \left(2i\right)^{\frac{n-1}{2}}  \left(  (2i)^{2a}\prod_{m=1}^{ 2a}  \cos \frac{2m\pi }{4a+1}\right)^{4a}\\
    &= \left(2i\right)^{\frac{n-1}{2}}  \left(  (2i)^{4a} \left(\prod_{m=1}^{ 2a}  \cos \frac{2m\pi }{4a+1}\right)^2\right)^{2a}\\
    &= \left(2i\right)^{\frac{n-1}{2}}  \left(  2^{4a}  \left(\frac{1}{4} \right)^{2a}\right)^{2a} ~~~~~~~~~~~~~\text{ by Lemma \ref{l2.10},}\\
    &= \left(2i\right)^{\frac{n-1}{2}} . 
    \end{align*}
    Therefore, we have $\det({\rm rcir}({\boldsymbol{c}_n(i)}))=(2i)^{\frac{n-1}{2}} $.
\end{proof}

\begin{Proposition}  If $n \equiv 2\,{\rm mod}\, 4$, then 
    \[\det({\rm rcir}({\boldsymbol{c}_n(i)}))=2^{n-1}.\]
\end{Proposition}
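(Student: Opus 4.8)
The plan is to run the same pairing argument used in the $n\equiv 1\,{\rm mod}\,4$ case, but replacing the odd-$n$ special eigenvalue by the even-$n$ product identity $\lambda_0\lambda_{n/2}=2^{n-1}$ and using the exact cosine-product formula of Lemma~\ref{l2.9} rather than the squared version in Lemma~\ref{l2.10}. First I would write $n=4a+2$ for a nonnegative integer $a$, so $n$ is even with $n/2=2a+1$, $n/2-1=2a$ and $n-1=4a+1$. By Lemmas~\ref{1.3} and \ref{2.5}, $\det({\rm rcir}(\boldsymbol{c}_n(i)))=\prod_{m=0}^{n-1}\lambda_m$; grouping the factors so that $m$ is paired with $n-m$ (with $m=n/2$ self-paired), this becomes
\[
\det({\rm rcir}(\boldsymbol{c}_n(i)))=\lambda_0\,\lambda_{n/2}\prod_{m=1}^{n/2-1}\lambda_m\lambda_{n-m}.
\]

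Next I would apply part~(2) of Lemma~\ref{lem:propSpecLam}, giving $\lambda_0\lambda_{n/2}=2^{n-1}$, and Lemma~\ref{l3.5}, giving $\lambda_m\lambda_{n-m}=\left(2i\cos\frac{2m\pi}{n}\right)^{n-1}$ for $1\le m\le n/2-1$. Since $\frac{2m\pi}{n}=\frac{m\pi}{2a+1}$ and $n-1=4a+1$, this reduces the determinant to
\[
\det({\rm rcir}(\boldsymbol{c}_n(i)))=2^{n-1}\left((2i)^{2a}\prod_{m=1}^{2a}\cos\frac{m\pi}{2a+1}\right)^{4a+1}.
\]
I would then evaluate the inner product by Lemma~\ref{l2.9} with $k=2a$, obtaining $\prod_{m=1}^{2a}\cos\frac{m\pi}{2a+1}=\dfrac{\sin\frac{(2a+1)\pi}{2}}{2^{2a}}=\dfrac{(-1)^a}{2^{2a}}$, where $\sin\!\left(a\pi+\frac{\pi}{2}\right)=\cos(a\pi)=(-1)^a$. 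Combined with $(2i)^{2a}=2^{2a}(i^2)^a=2^{2a}(-1)^a$, the quantity inside the large parentheses is $2^{2a}(-1)^a\cdot\frac{(-1)^a}{2^{2a}}=1$, so its $(4a+1)$-st power is $1$ and $\det({\rm rcir}(\boldsymbol{c}_n(i)))=2^{n-1}$. The edge case $n=2$ (i.e.\ $a=0$) is covered as well: the product over $m$ is empty and the identity degenerates to $\lambda_0\lambda_1=2^{n-1}$.

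I expect the only delicate point to be the sign bookkeeping: one must notice that the sign $(-1)^a$ produced by $\sin\frac{(2a+1)\pi}{2}$ in Lemma~\ref{l2.9} cancels exactly against the factor $i^{2a}=(-1)^a$, which is precisely why the cosine product must be computed with the exact value from Lemma~\ref{l2.9} and not merely its square from Lemma~\ref{l2.10}. Once this cancellation is identified, everything else is routine arithmetic with powers of $2$ and of $i$.
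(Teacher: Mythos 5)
Your proof is correct and follows essentially the same route as the paper's: pair $\lambda_m$ with $\lambda_{n-m}$, use $\lambda_0\lambda_{n/2}=2^{n-1}$ from Lemma~\ref{lem:propSpecLam}, reduce the pairs via Lemma~\ref{l3.5}, and evaluate $\prod_{m=1}^{2a}\cos\frac{m\pi}{2a+1}$ by the exact sine formula. In fact you correctly attribute that last step to Lemma~\ref{l2.9} (the paper's own computation uses that formula but mis-cites Lemma~\ref{l2.10}), and your explicit treatment of the edge case $n=2$ (i.e.\ $a=0$) is a small improvement, since the paper's phrase ``for some positive integer $a$'' technically omits it.
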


\begin{proof}
    Let $n\geq 2$ be a positive integer such that $n \equiv 2\,{\rm mod}\, 4$.  Then $n=2(2a+1)$ for some positive integer $a$.  By Lemmas \ref{1.3} and \ref{2.5},  it can be deduced that 
    \begin{align*}
    \det ({\rm rcir}({\boldsymbol{c}_n(i)}))&= \prod_{m=0}^{n-1} \lambda_m\\
    &=\lambda_0 \lambda_{\frac{n}{2}}  \prod_{m=1}^{\frac{n}{2}-1} \lambda_m\lambda_{n-m}\\
    &=  2^{n-1}\prod_{m=1}^{ \frac{n}{2}-1} \left( 2i \cos \frac{2m\pi }{n}\right)^{n-1} ~~~~~~~~~~~~~\text{ by Lemmas  \ref{l3.5} and \ref{lem:propSpecLam},}\\
    &= 2^{n-1} \left(  \prod_{m=1}^{ 2a} 2i \cos \frac{2m\pi }{4a+2}\right)^{4a+1} \\
    &= 2^{n-1} \left(  (2i )^{2a}\prod_{m=1}^{ 2a} \cos \frac{m\pi }{2a+1}\right)^{4a+1} \\
    &= 2^{n-1} \left(  (2)^{2a}(-1)^a  \frac{\sin\frac{(2a+1)\pi}{2}}{2^{2a}}    \right)^{4a+1}   ~~~ ~\text{ by Lemma \ref{l2.10},}\\
    &= 2^{n-1} \left(  (2)^{2a}(-1)^a  \frac{(-1)^{a}}{2^{2a}}    \right)^{4a+1} \\
    &=   2^{n-1}.
    \end{align*} Hence, $\det({\rm rcir}({\boldsymbol{c}_n(i)}))=2^{n-1}$ as desired.
\end{proof}

\begin{Proposition}  If $n \equiv 3\,{\rm mod}\, 4$, then 
    \[\det({\rm rcir}({\boldsymbol{c}_n(i)}))=- (2i)^{\frac{n-1}{2}} .\]
\end{Proposition}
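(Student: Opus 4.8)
The plan is to mirror the proof of the $n\equiv 1\,{\rm mod}\,4$ case, since $n\equiv 3\,{\rm mod}\,4$ is also odd: write $n=4a+3$ for some nonnegative integer $a$, so that $\frac{n-1}{2}=2a+1$ is odd. By Lemmas \ref{1.3} and \ref{2.5}, pair up the conjugate eigenvalues via Lemma \ref{l3.5} and peel off the lone unpaired eigenvalue $\lambda_0$ via Lemma \ref{lem:propSpecLam}(1):
\begin{align*}
\det({\rm rcir}({\boldsymbol{c}_n(i)})) &= \lambda_0 \prod_{m=1}^{\frac{n-1}{2}} \lambda_m\lambda_{n-m} = (2i)^{\frac{n-1}{2}} \prod_{m=1}^{2a+1}\left(2i\cos\frac{2m\pi}{4a+3}\right)^{n-1}.
\end{align*}

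The main work is evaluating $P:=\prod_{m=1}^{2a+1}\cos\frac{2m\pi}{4a+3}$. Here $k=2a+1$ plays the role of the index $k$ in Lemma \ref{l2.10} (since $2k+1=4a+3$), so $P^2=(1/4)^{2a+1}$. Factoring out the $2i$'s, the product of the paired eigenvalues becomes $\bigl((2i)^{2a+1}P\bigr)^{n-1}$, and since $n-1=4a+2$ is even this equals $\bigl((2i)^{4a+2}P^2\bigr)^{2a+1}=\bigl(2^{4a+2}\cdot\frac{(-1)^{2a+1}}{2^{4a+2}}\bigr)^{2a+1}=(-1)^{2a+1}=-1$. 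Hence $\det({\rm rcir}({\boldsymbol{c}_n(i)}))=-(2i)^{\frac{n-1}{2}}$, as claimed. Note that $(2i)^{2a+1}=2^{2a+1}i^{2a+1}$ has $i^{2a+1}=\pm i$, so $(2i)^{4a+2}=2^{4a+2}i^{4a+2}=2^{4a+2}(-1)^{2a+1}=-2^{4a+2}$, which is consistent with the sign bookkeeping above.

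The one subtlety, which I would flag explicitly, is the boundary case $a=0$, i.e.\ $n=3$: then the product $\prod_{m=1}^{1}$ has a single factor $\cos\frac{2\pi}{3}=-\frac12$, and one checks directly that $\det({\rm rcir}({\boldsymbol{c}_3(i)}))=\lambda_0\lambda_1\lambda_2=(2i)^1\cdot(2i\cos\frac{2\pi}{3})^2=2i\cdot(-i)^2\cdot\ldots$ — in any case the general formula gives $-(2i)^{1}=-2i$, and the computation should be checked against the matrix directly to confirm no sign error. I expect no genuine obstacle here: the only thing to be careful about is tracking the parity of $\frac{n-1}{2}=2a+1$ (odd, in contrast to the even exponent $2a$ appearing in the $n\equiv1$ case), which is precisely what produces the extra minus sign distinguishing this result from the $n\equiv1\,{\rm mod}\,4$ case. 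Alternatively, one could invoke Lemma \ref{conj}: since $\overline{i}=-i$, one has $\det({\rm rcir}(\boldsymbol{c}_n(-i)))=\overline{\det({\rm rcir}(\boldsymbol{c}_n(i)))}$, but this relates the $i$ and $-i$ cases rather than giving the $n\equiv3$ value directly, so the direct computation above is the cleaner route.
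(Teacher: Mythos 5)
Your proposal is correct and follows essentially the same route as the paper: pair $\lambda_m$ with $\lambda_{n-m}$ via Lemma \ref{l3.5}, peel off $\lambda_0=(2i)^{\frac{n-1}{2}}$ via Lemma \ref{lem:propSpecLam}, and apply Lemma \ref{l2.10} with $k=2a+1$, the odd parity of $2a+1$ producing the extra minus sign. Your remark about the boundary case $a=0$ (i.e.\ $n=3$) is a small improvement, since the paper writes ``for some positive integer $a$'' where ``nonnegative'' is what is actually needed.
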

\begin{proof} Let $n\geq 2$ be a positive integer such that $n \equiv 3\,{\rm mod}\, 4$.  Then $n=4a+3$ for some positive integer $a$. By Lemmas \ref{1.3} and \ref{2.5}, we have 
    \begin{align*}
    \det ({\rm rcir}({\boldsymbol{c}_n(i)}))&= \prod_{m=0}^{n-1} \lambda_m\\
    &=\lambda_0 \prod_{m=1}^{\frac{n-1}{2}} \lambda_m\lambda_{n-m}\\
    &= \left(2i\right)^{\frac{n-1}{2}}  \prod_{m=1}^{ \frac{n-1}{2}} \left( 2i \cos \frac{2m\pi }{n}\right)^{n-1} ~~~~\text{ by Lemmas  \ref{l3.5} and \ref{lem:propSpecLam},}\\
    &= \left(2i\right)^{\frac{n-1}{2}}  \left(  \prod_{m=1}^{ 2a+1} 2i \cos \frac{2m\pi }{4a+3}\right)^{4a+2}\\
    &= \left(2i\right)^{\frac{n-1}{2}}  \left(  (2i)^{2a+1}\prod_{m=1}^{ 2a+1}  \cos \frac{2m\pi }{4a+3}\right)^{4a+2}\\
    &= \left(2i\right)^{\frac{n-1}{2}}  \left(  (2i)^{4a+2} \left(\prod_{m=1}^{ 2a+1}  \cos \frac{2m\pi }{4a+3}\right)^2\right)^{2a+1}\\
    &= \left(2i\right)^{\frac{n-1}{2}}  \left(  -2^{4a+2}  \left(\frac{1}{4} \right)^{2a+1}\right)^{2a+1}~~~~~~~ ~~\text{ by Lemma \ref{l2.10},}\\
    &= -\left(2i\right)^{\frac{n-1}{2}} . 
    \end{align*}
    Therefore,   
   $\det({\rm rcir}({\boldsymbol{c}_n(i)}))=- (2i)^{\frac{n-1}{2}}$ for all positive integers $n$ such that $n \equiv 3\,{\rm mod}\, 4$.
\end{proof}

The results can be summarized as follows.
\begin{Theorem}\label{thm_i} Let $n\geq 2$ be a positive integer. Then 
    \[\det({\rm rcir}({\boldsymbol{c}_n(i)}))= \begin{cases}
    0   & \text{ if } n\equiv 0\,{\rm mod}\, 4\\
    (2i)^{\frac{n-1}{2}}   & \text{ if }n\equiv 1\,{\rm mod}\, 4\\
    2^{n-1}   &  \text{ if }n\equiv 2\,{\rm mod}\, 4\\
    -(2i)^{\frac{n-1}{2}}     & \text{ if }n\equiv 3\,{\rm mod}\, 4.
    \end{cases}\]
\end{Theorem}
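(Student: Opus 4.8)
The plan is to assemble Theorem~\ref{thm_i} directly from the four preceding propositions, which already treat the residue classes $n\equiv 0,1,2,3 \pmod 4$ separately. Since every positive integer $n\geq 2$ falls into exactly one of these four classes, the case split is exhaustive and mutually exclusive, so the piecewise formula is simply the concatenation of the four results. Concretely, I would write: let $n\geq 2$ be a positive integer; if $n\equiv 0\pmod 4$ then $\det({\rm rcir}(\boldsymbol{c}_n(i)))=0$ by the first proposition; if $n\equiv 1\pmod 4$ then it equals $(2i)^{(n-1)/2}$ by the second; if $n\equiv 2\pmod 4$ then it equals $2^{n-1}$ by the third; and if $n\equiv 3\pmod 4$ then it equals $-(2i)^{(n-1)/2}$ by the fourth. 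Hence the displayed case formula holds.

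The only genuine mathematical content has already been carried out in the propositions, so there is no real obstacle remaining at the level of the theorem; the \emph{work} was in those lemmas and propositions. For completeness I would recall the skeleton of that work, in case a reader wants the argument in one place: by Lemmas~\ref{1.3} and~\ref{2.5}, $\det({\rm rcir}(\boldsymbol{c}_n(i)))=\prod_{m=0}^{n-1}\lambda_m$ with $\lambda_m=(1+ie^{2m\pi i/n})^{n-1}$ (Lemma~\ref{2.2}). One then pairs $\lambda_m$ with $\lambda_{n-m}$, using Lemma~\ref{l3.5} to get $\lambda_m\lambda_{n-m}=(2i\cos\frac{2m\pi}{n})^{n-1}$, and handles the unpaired indices ($m=0$ always, and $m=n/2$ when $n$ is even) via Lemma~\ref{lem:propSpecLam}, which also supplies the vanishing eigenvalue $\lambda_{n/4}=0$ when $4\mid n$. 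The resulting products of cosines are evaluated by the trigonometric identities in Lemmas~\ref{lemEi1}, \ref{l2.9}, and~\ref{l2.10}.

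If one wanted a slightly more self-contained statement of the proof rather than a bare citation of the four propositions, I would note the one subtlety worth flagging: the pairing $\prod_{m=1}^{n-1}\lambda_m=\prod_{m=1}^{\lfloor (n-1)/2\rfloor}\lambda_m\lambda_{n-m}$ when $n$ is odd, versus $\prod_{m=1}^{n-1}\lambda_m=\lambda_{n/2}\prod_{m=1}^{n/2-1}\lambda_m\lambda_{n-m}$ when $n$ is even, is exactly why the odd cases ($n\equiv 1,3$) produce the factor $(2i)^{(n-1)/2}$ from $\lambda_0$ alone while the even case $n\equiv 2$ produces $2^{n-1}$ from $\lambda_0\lambda_{n/2}$; the sign discrepancy between $n\equiv 1$ and $n\equiv 3$ comes from the parity of the exponent on $-\tfrac14$ appearing after Lemma~\ref{l2.10} is applied (an even power of $-1$ in the $n=4a+1$ case, an odd power in the $n=4a+3$ case). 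With those points in hand the proof is a one-line appeal to the case analysis:

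\begin{proof}
Every positive integer $n\geq 2$ satisfies exactly one of $n\equiv 0,1,2,3 \pmod 4$. The four displayed values of $\det({\rm rcir}(\boldsymbol{c}_n(i)))$ in these respective cases are precisely the conclusions of the four preceding propositions. Hence the claimed formula holds.
\end{proof}
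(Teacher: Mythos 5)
Your proposal matches the paper exactly: the theorem is stated immediately after the four propositions with the remark ``The results can be summarized as follows,'' so its proof is precisely the assembly of those four cases, which is what you do. Correct, and the same approach.
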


\subsection{Right  Circulant Matrices from Coefficients of $(x-iy)^{n-1}$}
In this subsection,  we focus on the determinant of ${\rm rcir}({\boldsymbol{c}_n(-i)})$. The formula for  ${\rm rcir}({\boldsymbol{c}_n(-i)})$ can be given in based on the determinant of ${\rm rcir}({\boldsymbol{c}_n(i)})$ given in Subsection 3.2.

By setting $z=-i$ in   Lemma \ref{Eigen}, the eigenvalues of ${\rm rcir}({\boldsymbol{c}_n(-i)})$.

\begin{Lemma} 
	Let $n\geq 2$ be a positive integer.  Then the eigenvalues of ${\rm rcir}({\boldsymbol{c}_n(-i)})$  are of the form  $\lambda_m=(1-ie^\frac{2m\pi i}{n})^{n-1}$  for all $m = 0,1,2,\dots,n-1$.
\end{Lemma}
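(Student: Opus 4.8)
The plan is to derive this lemma exactly as the paper derives the other eigenvalue lemmas, namely by specializing Lemma~\ref{Eigen} (which gives the eigenvalues of ${\rm rcir}(\boldsymbol{c}_n(z))$ as $\lambda_m=(1+ze^{2m\pi i/n})^{n-1}$) to the value $z=-i$. Substituting $z=-i$ directly yields $\lambda_m=(1+(-i)e^{2m\pi i/n})^{n-1}=(1-ie^{2m\pi i/n})^{n-1}$ for all $m=0,1,\dots,n-1$, which is precisely the claimed form.

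The only thing to check is that the hypotheses of Lemma~\ref{Eigen} are met: we need $n\geq 2$ a positive integer and $z$ a nonzero complex number. Since $z=-i$ is nonzero and $n\geq 2$ by assumption, Lemma~\ref{Eigen} applies verbatim. So the proof is a one-line citation followed by the substitution. A proof body along the lines of: ``Apply Lemma~\ref{Eigen} with $z=-i$. Since $-i\neq 0$, the eigenvalues of ${\rm rcir}(\boldsymbol{c}_n(-i))$ are $\lambda_m=(1+(-i)e^{2m\pi i/n})^{n-1}=(1-ie^{2m\pi i/n})^{n-1}$ for all $m=0,1,\dots,n-1$.'' would suffice.

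There is essentially no obstacle here: this is a routine specialization lemma of the same type as Lemmas~\ref{1_-1} and~\ref{2.2}, and it serves only to set up the subsequent computation of $\det({\rm rcir}(\boldsymbol{c}_n(-i)))$. Indeed, the paper signals in the surrounding text that the determinant in the $-i$ case will ultimately be obtained from the $i$ case via the complex-conjugation relation in Lemma~\ref{conj} (using that $\overline{i}=-i$), so this eigenvalue lemma is merely a bookkeeping step and its proof is immediate from Lemma~\ref{Eigen}.
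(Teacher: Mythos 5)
Your proposal matches the paper exactly: the paper states this lemma as an immediate consequence of setting $z=-i$ in Lemma~\ref{Eigen}, with no further argument needed. The substitution and the check that $-i\neq 0$ are all that is required, so your proof is correct and takes essentially the same (and only sensible) route.
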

Using the analysis as in Subsection 2.2, the determinant of  ${\rm rcir}({\boldsymbol{c}_n(-i)})$ can be determined.

Alternatively, we have $-i=\overline{i}$. From Lemma \ref{conj}, we have  \[\det({\rm rcir}({\boldsymbol{c}_n(-i)}))= \det({\rm rcir}({\boldsymbol{c}_n(\overline{i})})) =\overline{ \det({\rm rcir}({\boldsymbol{c}_n({i})}))} .\]
Based on Theorem \ref{thm_i}, the formula for $\det({\rm rcir}({\boldsymbol{c}_n(-i)}))$ can be derived in the following theorem.
 
\begin{Theorem} \label{thm_-i}Let $n\geq 2$ be a positive integer. Then 
	\[\det({\rm rcir}({\boldsymbol{c}_n(-i)}))= \begin{cases}
	0   & \text{ if } n\equiv 0\,{\rm mod}\, 4\\
	(2i)^{\frac{n-1}{2}}   & \text{ if }n\equiv 1\,{\rm mod}\, 4 \text{ or }n\equiv 3\,{\rm mod}\, 4.\\
	2^{n-1}   &  \text{ if }n\equiv 2\,{\rm mod}\, 4 .
	\end{cases}\]
\end{Theorem}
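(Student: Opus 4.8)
The plan is to derive Theorem~\ref{thm_-i} as a direct corollary of Theorem~\ref{thm_i} together with Lemma~\ref{conj}, which is exactly the route sketched in the text immediately preceding the statement. First I would record the identity $-i = \overline{i}$ and invoke Lemma~\ref{conj} to get
\[
\det({\rm rcir}(\boldsymbol{c}_n(-i))) = \det({\rm rcir}(\boldsymbol{c}_n(\overline{i}))) = \overline{\det({\rm rcir}(\boldsymbol{c}_n(i)))}.
\]
Then I would substitute the four-case formula from Theorem~\ref{thm_i} and conjugate each branch termwise, keeping the residue of $n$ modulo $4$ as the case split.

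The only computation that is not completely mechanical is conjugating $(2i)^{\frac{n-1}{2}}$, and here one must be slightly careful because the exponent $\frac{n-1}{2}$ depends on the parity of $n$ in a way that interacts with $\overline{2i} = -2i = i^2\cdot 2i$. The cleanest way is to use $\overline{(2i)^{\frac{n-1}{2}}} = (\overline{2i})^{\frac{n-1}{2}} = (-2i)^{\frac{n-1}{2}} = (-1)^{\frac{n-1}{2}}(2i)^{\frac{n-1}{2}}$, valid whenever $\frac{n-1}{2}$ is an integer, i.e. when $n$ is odd, which is precisely the regime ($n\equiv1$ or $3\bmod 4$) where this term appears. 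For $n\equiv 1\bmod 4$ we have $\frac{n-1}{2}$ even, so $(-1)^{\frac{n-1}{2}}=1$ and the value is unchanged: $(2i)^{\frac{n-1}{2}}$. For $n\equiv 3\bmod 4$ we have $\frac{n-1}{2}$ odd, so $(-1)^{\frac{n-1}{2}}=-1$, and conjugating the Theorem~\ref{thm_i} value $-(2i)^{\frac{n-1}{2}}$ gives $-(-1)(2i)^{\frac{n-1}{2}} = (2i)^{\frac{n-1}{2}}$. Thus both odd cases collapse to the single value $(2i)^{\frac{n-1}{2}}$, matching the statement.

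The remaining two cases are immediate: for $n\equiv 0\bmod 4$ the Theorem~\ref{thm_i} value is $0$, and $\overline{0}=0$; for $n\equiv 2\bmod 4$ the value is $2^{n-1}\in\mathbb{R}$, so $\overline{2^{n-1}}=2^{n-1}$. Assembling the four conjugated branches yields exactly the piecewise formula claimed, completing the proof.

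I do not expect any genuine obstacle here; the statement is essentially a repackaging of Theorem~\ref{thm_i} under complex conjugation, and the ``hard part'' is merely the bookkeeping of signs in $(-1)^{\frac{n-1}{2}}$ across the two odd residue classes, which is what forces the two branches of Theorem~\ref{thm_i} to merge into one branch of Theorem~\ref{thm_-i}. An alternative self-contained route would be to mimic Subsection~3.2 verbatim with $z=-i$: establish $\lambda_m\lambda_{n-m} = (-2i\cos\frac{2m\pi}{n})^{n-1}$, the analogue of Lemma~\ref{l3.5}, and the analogues of Lemma~\ref{lem:propSpecLam} (with $\lambda_0 = (1-i)^{n-1}$, giving $\lambda_0 = (-2i)^{\frac{n-1}{2}}$ for $n$ odd, $\lambda_0\lambda_{n/2}=2^{n-1}$ for $n$ even, and $\lambda_{n/4}=0$ for $4\mid n$), then run the same four product computations using Lemmas~\ref{l2.9} and \ref{l2.10}. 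This is strictly more work and less illuminating, so I would only mention it in a remark and use the conjugation argument as the actual proof.
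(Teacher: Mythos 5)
Your proposal is correct and follows exactly the paper's own route: the paper also derives Theorem~\ref{thm_-i} by writing $-i=\overline{i}$, applying Lemma~\ref{conj} to get $\det({\rm rcir}(\boldsymbol{c}_n(-i)))=\overline{\det({\rm rcir}(\boldsymbol{c}_n(i)))}$, and then conjugating the four branches of Theorem~\ref{thm_i}, with the same parity analysis of $(-1)^{\frac{n-1}{2}}$ merging the two odd cases. No gaps.
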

\begin{proof}
	From Theorem \ref{thm_i}, we have 
	  \begin{align*}
	  \det({\rm rcir}({\boldsymbol{c}_n(-i)}))  
	  =\overline{\det({\rm rcir}({\boldsymbol{c}_n({i})}))}
	   = \begin{cases}
	0   & \text{ if } n\equiv 0\,{\rm mod}\, 4\\
	(-2i)^{\frac{n-1}{2}}   & \text{ if }n\equiv 1\,{\rm mod}\, 4\\
	2^{n-1}   &  \text{ if }n\equiv 2\,{\rm mod}\, 4\\
	-(-2i)^{\frac{n-1}{2}}     & \text{ if }n\equiv 3\,{\rm mod}\, 4.
	\end{cases}
	\end{align*}
	If $n\equiv 1\,{\rm mod}\, 4$, then $\frac{n-1}{2}$ is even and hence $(-1)^{\frac{n-1}{2}}=1 $. For  $n\equiv 3\,{\rm mod}\, 4$, then $\frac{n-1}{2}$ is odd and   $(-1)^{\frac{n-1}{2}}=-1 $. The result is therefore follows.
\end{proof}

\section{Determinants  of  Binomial-Related Left Circulant Matrices}

In this section, a brief summary on the determinant  of $ {\rm lcir}({\boldsymbol{c}_n(z)})$  is given based on $\det({\rm rcir}({\boldsymbol{c}_n(z)}))$ determined in Section 3  and   Corollary \ref{cor-left}. 

 From Corollary \ref{cor-left},  we have  \[\det( {\rm lcir}(\boldsymbol{a}) )= (-1)^{\lfloor \frac{n-1}{2}\rfloor} \det({\rm rcir}(\boldsymbol{a}) )\]
 for all $\boldsymbol{a}\in \mathbb{C}^n$.

Note that $    (-1)^{\lfloor \frac{n-1}{2}\rfloor} =1 $  if and only if $ n\equiv 1\,{\rm mod}\, 4 $ or  $ n\equiv 2\,{\rm mod}\, 4 $;  and   $    (-1)^{\lfloor \frac{n-1}{2}\rfloor} =-1 $  if and only if  $ n\equiv 0\,{\rm mod}\, 4 $  or  $ n\equiv 3\,{\rm mod}\, 4 $.  Together with Proposition \ref{prop1}, Proposition \ref{prop-1}, Theorem  \ref{thm_i} and Theorem \ref{thm_-i}, the following results concerning the determinants of Binomial-related left circulant matrices can be concluded.

 \begin{Proposition}[{\cite[Theorem 2.3]{R2013}}]  Let $n\geq 2$ be a positive integer. Then
 	\begin{equation*}
 	\det({\rm lcir}({\boldsymbol{c}_n(1)}))=(-1)^{\lfloor \frac{n-1}{2}\rfloor}  (1+(-1)^{n-1})2^{n-2}.
 	\end{equation*}
 \end{Proposition}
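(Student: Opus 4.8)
The plan is to obtain this as an immediate consequence of the two results already established: the left--right comparison in Corollary~\ref{cor-left} and the formula for the right circulant case in Proposition~\ref{prop1}. Concretely, I would apply Corollary~\ref{cor-left} to the vector $\boldsymbol{a}=\boldsymbol{c}_n(1)\in\mathbb{C}^n$, which gives
\[
\det({\rm lcir}({\boldsymbol{c}_n(1)}))=(-1)^{\lfloor \frac{n-1}{2}\rfloor}\det({\rm rcir}({\boldsymbol{c}_n(1)})),
\]
and then substitute the value $\det({\rm rcir}({\boldsymbol{c}_n(1)}))=(1+(-1)^{n-1})2^{n-2}$ from Proposition~\ref{prop1} to arrive at the claimed identity.

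Since both ingredients are in hand, there is essentially no computational obstacle here; the statement is a two-line assembly. The only points worth a sentence of care are that Corollary~\ref{cor-left} is stated for an arbitrary vector in $\mathbb{C}^n$ (so $\boldsymbol{c}_n(1)$ certainly qualifies), and that the sign $(-1)^{\lfloor (n-1)/2\rfloor}$ enters merely as an external multiplicative constant and does not need to be unpacked against the parity case split implicit in the factor $1+(-1)^{n-1}$. Should a self-contained argument be preferred, one could instead recompute $\det(H)=(-1)^{\lfloor(n-1)/2\rfloor}$ exactly as in the proof of Corollary~\ref{cor-left} and multiply it by the eigenvalue product $\prod_{m=0}^{n-1}(1+e^{2m\pi i/n})^{n-1}$ from Lemma~\ref{1_-1}; but citing Corollary~\ref{cor-left} together with Proposition~\ref{prop1} is the cleaner route and is the one I would follow.
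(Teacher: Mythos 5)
Your proposal is correct and matches the paper's own argument: Section 4 derives this proposition exactly by applying Corollary \ref{cor-left} to $\boldsymbol{a}=\boldsymbol{c}_n(1)$ and substituting the value of $\det({\rm rcir}({\boldsymbol{c}_n(1)}))$ from Proposition \ref{prop1}. Nothing further is needed.
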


 \begin{Proposition} Let $n\geq 2$ be a positive integer. Then
 	\begin{equation*}
 	\det({\rm lcir}({\boldsymbol{c}_n(-1)}))= 0.
 	\end{equation*}
 \end{Proposition}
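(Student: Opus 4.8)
The plan is to reduce the claim immediately to the right-circulant case, which has already been settled. By Corollary~\ref{cor-left} applied to $\boldsymbol{a} = \boldsymbol{c}_n(-1)$, we have
\[
\det({\rm lcir}({\boldsymbol{c}_n(-1)})) = (-1)^{\lfloor \frac{n-1}{2}\rfloor}\,\det({\rm rcir}({\boldsymbol{c}_n(-1)})).
\]
Now Proposition~\ref{prop-1} gives $\det({\rm rcir}({\boldsymbol{c}_n(-1)})) = 0$, so the right-hand side vanishes irrespective of the value of the sign $(-1)^{\lfloor \frac{n-1}{2}\rfloor}$, and the result follows.

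An alternative route, which avoids even invoking the explicit sign in Corollary~\ref{cor-left}, is to use Lemma~\ref{col:1} directly: since ${\rm lcir}(\boldsymbol{c}_n(-1)) = H\,{\rm rcir}(\boldsymbol{c}_n(-1))$ with $H$ a permutation-type matrix (hence invertible), the matrices ${\rm lcir}(\boldsymbol{c}_n(-1))$ and ${\rm rcir}(\boldsymbol{c}_n(-1))$ have the same rank, so the former is singular precisely because the latter is. The singularity of ${\rm rcir}(\boldsymbol{c}_n(-1))$ is in turn transparent from Lemma~\ref{1_-1}(2), since the eigenvalue $\lambda_0 = (1 - e^{0})^{n-1} = 0$.

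There is no genuine obstacle here: the statement is a one-line corollary of Proposition~\ref{prop-1} together with Corollary~\ref{cor-left} (or Lemma~\ref{col:1}). The only point worth flagging is that one does not need to determine the sign $(-1)^{\lfloor \frac{n-1}{2}\rfloor}$ at all, as multiplying $0$ by any scalar yields $0$; I would therefore present the proof in the shorter form above rather than splitting into residue classes of $n$ modulo $4$.
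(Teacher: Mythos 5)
Your proof is correct and follows exactly the route the paper takes: combining Corollary~\ref{cor-left} with Proposition~\ref{prop-1}, so the vanishing of $\det({\rm rcir}({\boldsymbol{c}_n(-1)}))$ forces the left-circulant determinant to vanish as well. Your observation that the sign $(-1)^{\lfloor \frac{n-1}{2}\rfloor}$ is irrelevant here is a fair simplification, but the argument is the same as the paper's.
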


 \begin{Theorem} \label{t4.3} Let $n\geq 2$ be a positive integer. Then 
 	\[\det({\rm lcir}({\boldsymbol{c}_n(i)}))= \begin{cases}
 	0   & \text{ if } n\equiv 0\,{\rm mod}\, 4\\
 	(2i)^{\frac{n-1}{2}}   & \text{ if }n\equiv 1\,{\rm mod}\, 4 \text{ or }n\equiv 3\,{\rm mod}\, 4.\\
 	 2^{n-1}   &  \text{ if }n\equiv 2\,{\rm mod}\, 4 
 	\end{cases}\]
 \end{Theorem}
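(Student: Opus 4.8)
The plan is to reduce the claim entirely to the right-circulant case already established in Theorem~\ref{thm_i}, using the general identity in Corollary~\ref{cor-left}. First I would recall that for every $\boldsymbol{a}\in\mathbb{C}^n$ one has $\det({\rm lcir}(\boldsymbol{a}))=(-1)^{\lfloor (n-1)/2\rfloor}\det({\rm rcir}(\boldsymbol{a}))$, and then specialize $\boldsymbol{a}=\boldsymbol{c}_n(i)$, so that $\det({\rm lcir}({\boldsymbol{c}_n(i)}))=(-1)^{\lfloor (n-1)/2\rfloor}\det({\rm rcir}({\boldsymbol{c}_n(i)}))$.

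Next I would pin down the sign $\varepsilon_n:=(-1)^{\lfloor (n-1)/2\rfloor}$ as a function of $n \bmod 4$: writing $n=4a+r$ with $r\in\{0,1,2,3\}$, a short evaluation of $\lfloor (n-1)/2\rfloor$ gives $\varepsilon_n=1$ exactly when $r\in\{1,2\}$ and $\varepsilon_n=-1$ exactly when $r\in\{0,3\}$, which is precisely the dichotomy recorded just before the statement. Then I would multiply the four branches of Theorem~\ref{thm_i} by $\varepsilon_n$ one at a time: for $n\equiv 0\ (\mathrm{mod}\ 4)$ the right-circulant determinant is $0$, so the product is $0$; for $n\equiv 1$ we obtain $1\cdot(2i)^{(n-1)/2}$; for $n\equiv 2$ we obtain $1\cdot 2^{n-1}$; and for $n\equiv 3$ we obtain $(-1)\cdot\bigl(-(2i)^{(n-1)/2}\bigr)=(2i)^{(n-1)/2}$. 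Since the $n\equiv 1$ and $n\equiv 3$ outputs coincide, the three-case formula in the statement follows.

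The only point needing genuine care — the ``main obstacle,'' modest as it is — is the merging of the $n\equiv 1$ and $n\equiv 3$ branches: one must verify both that $\varepsilon_n$ changes sign between these two residues and that the sign already present in Theorem~\ref{thm_i} changes in the same way, so that the two sign reversals compose to the identity and yield the common value $(2i)^{(n-1)/2}$. Everything else is a direct substitution, so the proof is essentially a case check combining Corollary~\ref{cor-left} with Theorem~\ref{thm_i}.
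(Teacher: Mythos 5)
Your proposal is correct and follows exactly the paper's own route: Section 4 derives Theorem \ref{t4.3} by combining Corollary \ref{cor-left} (with the observation that $(-1)^{\lfloor (n-1)/2\rfloor}=1$ precisely when $n\equiv 1,2 \pmod 4$ and $-1$ when $n\equiv 0,3\pmod 4$) with the four branches of Theorem \ref{thm_i}. Your sign bookkeeping, including the cancellation that merges the $n\equiv 1$ and $n\equiv 3$ cases, matches the paper's computation.
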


Observe that  $\det({\rm rcir}({\boldsymbol{c}_n(-i)}))= \det({\rm lcir}({\boldsymbol{c}_n(i)}))$ by Theorems \ref{thm_-i} and \ref{t4.3}.
 
 \begin{Theorem} \label{thm4.4} Let $n\geq 2$ be a positive integer. Then 
 	\[\det({\rm lcir}({\boldsymbol{c}_n(-i)}))= \begin{cases}
 	0   & \text{ if } n\equiv 0\,{\rm mod}\, 4\\
 	(-1)^{  \frac{n-1}{2} } (2i)^{\frac{n-1}{2}}   & \text{ if }n\equiv 1\,{\rm mod}\, 4 \text{ or }n\equiv 3\,{\rm mod}\, 4.\\
 	   2^{n-1}   &  \text{ if }n\equiv 2\,{\rm mod}\, 4 
 	\end{cases}\]
 \end{Theorem}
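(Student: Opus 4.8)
The plan is to derive the formula directly from the right-circulant case, Theorem \ref{thm_-i}, together with the sign relation in Corollary \ref{cor-left}. By Corollary \ref{cor-left} applied to $\boldsymbol{a} = \boldsymbol{c}_n(-i)$, we have
\[
\det({\rm lcir}({\boldsymbol{c}_n(-i)})) = (-1)^{\lfloor \frac{n-1}{2}\rfloor}\,\det({\rm rcir}({\boldsymbol{c}_n(-i)})),
\]
so the entire task reduces to multiplying the piecewise formula of Theorem \ref{thm_-i} by the scalar $(-1)^{\lfloor \frac{n-1}{2}\rfloor}$ and simplifying in each residue class of $n$ modulo $4$.

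First I would record the elementary fact, already noted in the text preceding Proposition~4.1, that $(-1)^{\lfloor \frac{n-1}{2}\rfloor} = 1$ exactly when $n\equiv 1$ or $2 \pmod 4$, and $(-1)^{\lfloor \frac{n-1}{2}\rfloor} = -1$ exactly when $n\equiv 0$ or $3 \pmod 4$. Then I would split into the four cases. For $n\equiv 0 \pmod 4$ the right-circulant determinant vanishes, so the product is $0$. For $n\equiv 2 \pmod 4$ the sign is $+1$ and the value $2^{n-1}$ is unchanged. For $n\equiv 1 \pmod 4$ the sign is $+1$, so we get $(2i)^{\frac{n-1}{2}}$; since $\frac{n-1}{2}$ is even in this case, $(-1)^{\frac{n-1}{2}}=1$ and this equals $(-1)^{\frac{n-1}{2}}(2i)^{\frac{n-1}{2}}$. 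For $n\equiv 3 \pmod 4$ the sign is $-1$, giving $-(2i)^{\frac{n-1}{2}}$; since $\frac{n-1}{2}$ is odd here, $(-1)^{\frac{n-1}{2}}=-1$ and again this equals $(-1)^{\frac{n-1}{2}}(2i)^{\frac{n-1}{2}}$. Collecting the four outcomes gives exactly the stated piecewise formula.

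There is no substantive obstacle here: the only thing to watch is the bookkeeping that merges the $n\equiv 1$ and $n\equiv 3$ branches into the single clause $(-1)^{\frac{n-1}{2}}(2i)^{\frac{n-1}{2}}$, which works precisely because the parity of $\frac{n-1}{2}$ tracks the residue of $n$ modulo $4$ and compensates for the sign $(-1)^{\lfloor \frac{n-1}{2}\rfloor}$. I would present the argument compactly, perhaps by displaying the product $(-1)^{\lfloor \frac{n-1}{2}\rfloor}\det({\rm rcir}({\boldsymbol{c}_n(-i)}))$ as a four-line cases expression and then rewriting the two nonzero non-$2^{n-1}$ lines using $(-1)^{\frac{n-1}{2}}$, mirroring the style of the proof of Theorem \ref{thm_-i}.
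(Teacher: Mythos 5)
Your proposal is correct and follows exactly the route the paper intends: Theorem \ref{thm4.4} is stated there as an immediate consequence of Corollary \ref{cor-left} combined with Theorem \ref{thm_-i}, using the same observation that $(-1)^{\lfloor \frac{n-1}{2}\rfloor}=1$ precisely when $n\equiv 1$ or $2 \pmod 4$. Your case-by-case bookkeeping, including merging the $n\equiv 1$ and $n\equiv 3$ branches via the parity of $\frac{n-1}{2}$, matches the paper's formula exactly.
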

 Observe that  $\det({\rm rcir}({\boldsymbol{c}_n(i)}))= \det({\rm lcir}({\boldsymbol{c}_n(-i)}))$ by Theorems \ref{thm_i} and \ref{t4.4}.

%

\section{Conclusion and Remarks} The concept of complex Binomial-related   left and right  circulant matrices has been introduced. Such matrices are complex $n\times n$   circulant matrices  whose first row consists of the coefficients in the expansion of $(x+zy)^{n-1}$. In the case where $z=1$, the determinants of  Binomial  left and right  circulant matrices  have been determined in \cite{R2013}. In this paper,   the eigenvalues and the  determinants of  Binomial-related  left and right  circulant matrices  have been completely determined for $z\in \{-1,i,-1\}$. It is of natural interest to investigate  the eigenvalues and the  determinants of such matrices for other forms of $z$. This issue is remained as an open problem.

\section*{Acknowledgments}
{S. Jitman was supported by the Thailand Research Fund  under Research
    Grant MRG6080012.}

\end{document}